\theoremstyle{plain}
\newtheorem{theorem}{Theorem}[section]
\newtheorem{lemma}[theorem]{Lemma}
\newtheorem{proposition}[theorem]{Proposition}
\newtheorem{corollary}[theorem]{Corollary}
\theoremstyle{definition}
\newtheorem{definition}[theorem]{Definition}
\newtheorem{variant}[theorem]{Variant}
\newtheorem{construction}[theorem]{Construction}
\newtheorem{example}[theorem]{Example}
\newtheorem{observation[theorem]}{Observation}
\newtheorem{remark}[theorem]{Remark}
\newcommand{\mf}[1]{\mathfrak{#1}}
\newcommand{\mc}[1]{\mathcal{#1}}
\newcommand{\ol}[1]{\overline{#1}}
\newcommand{\cat}[1]{
\StrLen{#1}[\mystrlen]
\ifnum\mystrlen=1 \mathscr{#1}
\else \mathrm{#1}
\fi}
\newcommand{\oocat}[1]{
\StrLen{#1}[\mystrleng]
\ifnum\mystrleng=1 \mathscr{#1}
\else \mathrm{#1}
\fi
}
\newcommand{\Set}[0]{\oocat{Set}}
\newcommand{\sSet}[0]{\oocat{sSet}}
\newcommand{\sS}[0]{\cat{S}}
\newcommand{\dau}[0]{\partial}
\newcommand{\mb}[1]{\mathbf{#1}}
\newcommand{\ho}[0]{\mathrm{ho}}
\newcommand{\mm}[1]{\mathrm{#1}}
\newcommand{\Map}[0]{\mm{Map}}
\renewcommand{\rto}[2][\;]{\xrightarrow[#1]{#2}}
\newcommand{\rt}[0]{\rightarrow}
\newcommand{\lt}[0]{\leftarrow}
\newcommand{\Fun}[0]{\cat{Fun}}
\title{Localizing $\infty$-categories with hypercovers}
\author{Joost Nuiten}
\begin{document}

\begin{abstract}
Given an $\infty$-category with a set of weak equivalences which is stable under pullback, we show that the mapping spaces of the corresponding localization can be described as group completions of $\infty$-categories of spans. Furthermore, we show how these $\infty$-categories of spans are the mapping objects of an $(\infty, 2)$-category, which yields a Segal space model for the localization after a Kan fibrant replacement. 
\end{abstract}

\maketitle

\vspace{-7pt}
\section{Introduction}
The notion of a relative category, i.e.\ a category $\cat{C}$ equipped with a subcategory $W$ of weak equivalences, is ubiquitous in homotopy theory: from an early stage on, essentially every homotopy theory has been described by a relative category, which often serves as a better behaved placeholder for its homotopy category $\ho(\cat{C})$, the category obtained by formally turning the weak equivalences into isomorphisms (see e.g.\ \cite{gab67}).

It is nowadays understood that there is a more suitable and richer object that sits in between the category $\cat{C}$ and its homotopy category $\ho(\cat{C})$, given by the $\infty$-category $\cat{C}[W^{-1}]$ obtained by formally turning the weak equivalences in $\cat{C}$ into \emph{homotopy equivalences}. When $\infty$-categories are interpreted as simplicially enriched categories, the simplicial category $\cat{C}[W^{-1}]$ can be described (for example) by the simplicial localization of Dwyer and Kan \cite{dwy80b}. The $\infty$-category $\cat{C}[W^{-1}]$ usually reflects the good properties that the relative category $(\cat{C}, W)$ has; for example, when $(\cat{C}, W)$ is a combinatorial model category, the $\infty$-category $\cat{C}[W^{-1}]$ is locally presentable.

Although the point-set model for $\cat{C}[W^{-1}]$ provided by the Dwyer-Kan simplicial localization (or the closely related hammock localization) is useful for formal applications, it tends to be rather involved to understand its structure concretely. For example, the mapping spaces in the hammock localization consist of arbitrary zig-zags of maps and weak equivalences in $\cat{C}$. The mapping spaces in $\cat{C}[W^{-1}]$ have a much simpler description when the weak equivalences in $\cat{C}$ are stable under base change (a fact we emphasize by calling them \emph{hypercovers}): in this case one can reduce the length of the hammocks and describe the mapping spaces as nerves of `cocycle categories' whose objects are spans $\cdot \stackrel{\sim}{\lt}\cdot \rt \cdot$ whose left arrow is a hypercover (see e.g.\ \cite{cis10b, jar06}, and \cite{hor15} for the (homotopy coherent) composition). 

A classical situation in which one wants to invert such a class of hypercovers arises in the study of groupoid objects: for example, the category of Lie groupoids (i.e.\ groupoids internal to smooth manifolds) comes equipped with a class of weak equivalences known as Morita equivalences (the natural smooth analogue of an equivalence of groupoids) and formally inverting these equivalences yields a model for the 2-category of differentiable stacks (see e.g. \cite{pro96}). Although the Morita equivalences themselves are not stable under base change, they are \emph{generated} under the 2-out-of-3 property by the hypercovers (fully faithful maps that are surjective submersions on objects), which are stable under base change. One can therefore equivalently localize at these hypercovers, and obtain a description of the 2-category of differentiable stacks in terms of Lie groupoids and spans between them (with one map being a hypercover). Natural analogues of Lie groupoids also appear in derived-geometric settings: for example, in derived algebraic geometry one naturally encounters groupoid objects internal to derived schemes (see e.g.\ \cite{pri09}), which \emph{themselves} form an $\infty$-category. To study the localization of such an $\infty$-category of derived-geometric groupoids at its hypercovers (or equivalently, at its Morita equivalences), one would like an analogous description of the resulting mapping spaces in terms of $\infty$-categories of spans $\cdot \stackrel{\sim}{\lt}\cdot \rt \cdot$.

There are many other situations in which one may want to invert a set of `external' weak equivalences in an $\infty$-category, which itself already comes with a natural `internal' notion of homotopy equivalence. For example, when studying the $\infty$-category of topological monoids, one may consider the $\infty$-category of simplicial diagrams of such topological monoids, with the trivial Kan fibrations between them as the external weak equivalences (see also Example \ref{ex:algebras}). This provides a way to work with simplicial resolutions of topological monoids (for example, the free resolution provided by the bar construction) without having to deal with point-set issues arising in the usual homotopy theory of spaces.

The aim of this paper is to provide a model for the localization $\cat{C}[W^{-1}]$ of an $\infty$-category $\cat{C}$ at a set of hypercovers, in terms of cocycle $\infty$-categories. More precisely, we will show that associated to any $\infty$-category with hypercovers $(\cat{C}, W)$ is a certain $(\infty, 2)$-category, whose mapping objects are given by $\infty$-categories of spans $\cdot \stackrel{\sim}{\lt}\cdot \rt \cdot$ whose left arrows are hypercovers. The $\infty$-category obtained by group completing each of these mapping categories (i.e. by taking a Kan fibrant replacement) then gives a model for the localization $\cat{C}[W^{-1}]$ (see Theorem \ref{thm:bicatofspans}); in particular, the mapping spaces of $\cat{C}[W^{-1}]$ arise as group completions of $\infty$-categories of spans.

The paper is organized as follows: in Section \ref{sec:localization} we show how the mapping spaces of an $\infty$-categorical localization $\cat{C}[W^{-1}]$ can be described in terms of left fibrations satisfying some universal property. This allows one to study the mapping spaces of $\cat{C}[W^{-1}]$ without having to deal with the composition maps between them. Using this, we will show in Section \ref{sec:localizinghypercovers} how the localization of an $\infty$-category with hypercovers has mapping spaces given by group completions of $\infty$-categories of spans $\cdot \stackrel{\sim}{\lt}\cdot \rt \cdot$ whose left arrow is a hypercover. In Section \ref{sec:2catoffractions} we show how these $\infty$-categories of spans form the mapping categories of an $(\infty, 2)$-category, and how group-completing each of these mapping categories yields a model for $\cat{C}[W^{-1}]$.

\subsection*{Conventions} Throughout, we will use quasicategories as our chosen model for $\infty$-categories. We will use $\sS$ to denote the $\infty$-category of spaces (i.e.\ the coherent nerve of the simplicial category of Kan complexes), $\cat{C}/c$ to denote the usual over-$\infty$-category of $\cat{C}$ in the sense of Joyal and $\cat{C}^{/c}$ to denote the `alternative slice construction' of \cite[Section 4.2.1]{lur09}.

\subsection*{Acknowledgements} The author was supported by NWO.

\section{Mapping spaces in localized $\infty$-categories}\label{sec:localization}
Let $\cat{C}$ be an $\infty$-category equipped with a set of maps $W$ which is closed under homotopy, composition, base change and which contains all the equivalences. The aim of this section is to provide a simple (model-categorical) description of the mapping spaces of the localization $\cat{C}[W^{-1}]$ (see Corollary \ref{cor:mappingspacesfromfibrantreplacement}) in terms of left fibrations. This manoeuvre can be used to describe the mapping spaces of $\cat{C}[W^{-1}]$ without having to specify (coherently associative) composition maps between them. In particular, the flexibility of changing to weakly equivalent left fibrations allows us to provide various descriptions of the mapping spaces of $\cat{C}[W^{-1}]$ (e.g.\ a description in terms of spans, as in Section \ref{sec:localizinghypercovers}).

\begin{definition}\label{def:localization}
A relative $\infty$-category $(\cat{C}, W)$ is a (small) quasicategory $\cat{C}$, together with a set $W\subseteq \cat{C}_1$ of arrows in $\cat{C}$ which is closed under homotopy, composition and contains all equivalences in $\cat{C}$ (such $W$ is also called a \emph{system} in \cite{lur11}).

If $(\cat{C}, W)$ is a relative $\infty$-category, its \emph{localization} is a functor $u\colon \cat{C}\rt \cat{C}[W^{-1}]$ with the universal property that for any $\infty$-category $\cat{D}$, the map of spaces
$$\xymatrix{
u^*\colon \mm{Map}(\cat{C}[W^{-1}], \cat{D})\ar[r] & \mm{Map}(\cat{C}, \cat{D})
}$$
is the inclusion of the connected components consisting of those functors $f\colon \cat{C}\rt \cat{D}$ that send maps in $W$ to equivalences in $\cat{D}$.
\end{definition}
\begin{remark}\label{rem:invarianceunder2-3}
It follows immediately from the universal property that the localization at a set of maps $W$ is equivalent to the localization at the set of maps $\ol{W}$ obtained by closing $W$ under the $2$-out-of-$3$ property in the homotopy category $\ho(\cat{C})$. For example, this can be used to show that the localization of a category of fibrant objects (in the sense of Brown \cite{bro73}) at the weak equivalences is equivalent to its localization at the trivial fibrations. The latter are stable under base change, which allows for an easier description of the localization (see Section \ref{sec:localizinghypercovers}).
\end{remark}
\begin{remark}\label{rem:localizationaspushout}
Any (small) relative $\infty$-category $(\cat{C}, W)$ has a localization; in terms of simplicial categories, this is the Dwyer-Kan simplicial localization \cite{dwy80b}. In terms of quasicategories, let us abuse notation and denote by $\cat{W}\subseteq \cat{C}$ the maximal sub-simplicial set of $\cat{C}$ whose $1$-simplices are given by the set $W$. Since $W$ is closed under composition and contains all identity maps, $\cat{W}\subseteq \cat{C}$ is a quasicategory with the same set of objects as $\cat{C}$. Now let $\cat{W}\rt \cat{W}[W^{-1}]$ be the \emph{group completion} of $\cat{W}$, i.e.\ a fibrant replacement of $\cat{W}$ in the Kan-Quillen model structure, and let $\cat{C}[W^{-1}]$ be a Joyal-fibrant replacement of the (homotopy) pushout
\begin{equation}\label{diag:localizationaspushout}\vcenter{\xymatrix{
\cat{W}\ar[d]\ar[r] & \cat{W}[W^{-1}]\ar[d]\\
\cat{C}\ar[r]_-u & \cat{C}[W^{-1}].
}}\end{equation}
The functor $u\colon \cat{C}\rt \cat{C}[W^{-1}]$ realizes the localization of $\cat{C}$ at $W$. Since the Joyal and Kan-Quillen model structures admit fibrant replacement functors that are the identity on vertices, we may always assume that the functor $u\colon \cat{C}\rt \cat{C}[W^{-1}]$ is the identity map on vertices. Because of this, we tend to identify the objects of $\cat{C}$ and $\cat{C}[W^{-1}]$.
\end{remark}
To analyze the mapping spaces of $\cat{C}[W^{-1}]$, we will use the following simple observation: for any object $c\in \cat{C}$ (or in $\cat{C}[W^{-1}]$, by the above remark), there is a space-valued functor $\cat{C}[W^{-1}](c, -)\colon \cat{C}[W^{-1}]\rt \sS$. By the universal property of the functor $u\colon \cat{C}\rt \cat{C}[W^{-1}]$, it suffices to understand the values of the functor $u^*\cat{C}[W^{-1}](c, -)\colon \cat{C}\rt \sS$. We will show that the latter functor is itself characterized by a universal property in $\mm{Fun}(\cat{C}, \sS)$: it is the universal functor under the representable $\cat{C}(c, -)$ that sends maps in $W$ to equivalences of spaces. To this end, we will describe the situation in terms of model categories as follows:
\begin{definition}
Let $(\cat{C}, W)$ be a relative quasicategory. The \emph{$W$-local} covariant model structure, which we will denote by $\sSet^\mm{cov}/(\cat{C}, W)$, is the left Bousfield localization of the covariant model structure on $\sSet/\cat{C}$ at all maps
\begin{equation}\label{eq:localizingmaps}\vcenter{\xymatrix@R=1.3pc{
\{1\}\ar[rr] \ar[rd] & & \Delta[1]\ar[ld]^w\\
& \cat{C} &
}}\end{equation}
where $w\colon \Delta[1]\rt \cat{C}$ is a map in $W$. We will refer to the weak equivalences of this model structure as \emph{$W$-local equivalences} and to the fibrant objects of this model structure as \emph{$W$-local left fibrations}.
\end{definition}
\begin{lemma}\label{lem:characterizingWlocalfibrations}
Let $p\colon X\rt \cat{C}$ be a left fibration over a relative category $(\cat{C}, W)$. Then the following are equivalent:
\begin{enumerate}
 \item $p$ is a $W$-local left fibration, i.e.\ $p$ is fibrant in $\sSet^\mm{cov}/(\cat{C}, W)$.
 \item the base change $p|_{\cat{W}}\colon \cat{W}\times_{\cat{C}} X\rt \cat{W}$ along the inclusion $\cat{W}\rt \cat{C}$ (Remark \ref{rem:localizationaspushout}) is a Kan fibration.
 \item for every $w\colon \Delta[1]\rt \cat{C}$ in $W$, the base change $p'\colon w^*X\rt \Delta[1]$ is a Kan fibration.
\end{enumerate}
\end{lemma}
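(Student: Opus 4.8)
The plan is to prove the three equivalences by establishing the cycle of implications, using the standard machinery of left Bousfield localizations together with the key feature that the localizing maps in \eqref{eq:localizingmaps} are indexed precisely by the arrows $w \in W$.

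\textbf{Setting up via Bousfield localization.} First I would recall that by the general theory of left Bousfield localization, a covariantly fibrant object $p\colon X \rt \cat{C}$ is fibrant in $\sSet^\mm{cov}/(\cat{C}, W)$ if and only if it is \emph{local} with respect to every localizing map \eqref{eq:localizingmaps}, i.e.\ for each $w\colon \Delta[1] \rt \cat{C}$ in $W$ the induced map of mapping spaces
$$\xymatrix{
w^*\colon \mm{Map}_{/\cat{C}}\big(\Delta[1]\xrightarrow{w}\cat{C},\ X\big)\ar[r] & \mm{Map}_{/\cat{C}}\big(\{1\}\rt\cat{C},\ X\big)
}$$
is a weak equivalence of spaces. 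Since $p$ is a left fibration, these mapping spaces can be computed as fibers: the right-hand side is the fiber $X_{w(1)}$ over the endpoint, while the left-hand side is (up to equivalence) the space of sections of the pullback $w^*X \rt \Delta[1]$. This already reformulates condition (1) as: for every $w\in W$, restriction along $\{1\}\hookrightarrow\Delta[1]$ induces an equivalence on the relevant mapping spaces. This is the conceptual heart of the argument, and I expect identifying these mapping spaces with the appropriate fibers of $X$ (using that $X\rt\cat{C}$ is a left fibration, so that the covariant mapping spaces are genuinely the fibers) to be the main technical point to get right.

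\textbf{Equivalence of (1) and (3).} Next I would show (1) $\Leftrightarrow$ (3). Since $p$ is a left fibration, its base change $p'\colon w^*X \rt \Delta[1]$ is again a left fibration. A left fibration over $\Delta[1]$ is automatically a left fibration over the nerve of a category, and the key observation is that a left fibration over $\Delta[1]$ is a Kan fibration precisely when the associated edge acts invertibly, i.e.\ when the functor $X_{w(0)}\rt X_{w(1)}$ on fibers induced by $w$ is a homotopy equivalence (equivalently, when $w^*X \rt \Delta[1]$ is a \emph{left} fibration all of whose fibers are connected by equivalences, forcing it to be a Kan fibration). Unwinding the locality condition from the previous paragraph, $p$ is local at $w$ exactly when the map $X_{w(0)}\rt X_{w(1)}$ classified by the left fibration $w^*X$ is an equivalence — which is exactly the statement that $p'$ is a Kan fibration. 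Running over all $w\in W$ gives the equivalence of (1) and (3).

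\textbf{Equivalence of (2) and (3).} Finally I would prove (2) $\Leftrightarrow$ (3). The implication (2) $\Rightarrow$ (3) is immediate: each $w\colon \Delta[1]\rt\cat{C}$ in $W$ factors through $\cat{W}\subseteq \cat{C}$ (since its edge lies in $W$, hence in $\cat{W}_1$), so $w^*X = w^*(p|_{\cat{W}})$ is a base change of the Kan fibration $p|_{\cat{W}}$ along a map of simplicial sets, and Kan fibrations are stable under base change. For the converse (3) $\Rightarrow$ (2), I would use that $p|_{\cat{W}}\colon \cat{W}\times_{\cat{C}}X \rt \cat{W}$ is a left fibration (being a base change of the left fibration $p$), so it suffices to check that every edge of $\cat{W}$ acts invertibly on fibers; but every edge of $\cat{W}$ lies in $W$ by construction, and condition (3) guarantees exactly that each such edge induces a Kan fibration on the corresponding pullback over $\Delta[1]$, hence an equivalence on fibers. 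A left fibration all of whose edges act invertibly on fibers is a Kan fibration, completing the cycle. The only subtlety here is to invoke cleanly the characterization of when a left fibration is a Kan fibration (namely, that it is a \emph{left} fibration in which every morphism is ``invertible'' on fibers, equivalently one whose classifying functor $\cat{W}\rt\sS$ factors through spaces); this is where I would be careful to cite the correct stability and recognition results for left versus Kan fibrations.
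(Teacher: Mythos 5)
Your proof is correct, but it follows a genuinely different route from the paper's. The paper proves the cycle $(1)\Rightarrow(2)\Rightarrow(3)\Rightarrow(1)$: for $(1)\Rightarrow(2)$ it runs a combinatorial induction showing that every right outer horn inclusion $\Lambda^n[n]\rt\Delta[n]$ lying over $\cat{W}$ is a $W$-local trivial cofibration (base case the localizing map itself, inductive step via the spine inclusion $\mm{Sp}[n]\rt\Delta[n]$ and iterated pushouts), so that a $W$-locally fibrant left fibration acquires the missing lifting property over $\cat{W}$; and for $(3)\Rightarrow(1)$ it uses the lifting-property form of locality, namely the right lifting property against pushout-products of $\{1\}\rt\Delta[1]$ with monomorphisms, which are anodyne over any $w\in W$. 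You instead prove $(1)\Leftrightarrow(3)$ by the mapping-space form of locality, identifying $\mm{Map}_{/\cat{C}}(\Delta[1],X)$ with the section space of $w^*X$ and $\mm{Map}_{/\cat{C}}(\{1\},X)$ with $X_{w(1)}$, and then $(3)\Rightarrow(2)$ via the recognition principle that a left fibration is a Kan fibration precisely when every edge of the base acts by homotopy equivalences on fibers. Both are sound; yours is conceptually shorter and makes the slogan ``locality $=$ inverting the monodromy'' explicit, while the paper's is more elementary and self-contained in lifting combinatorics. Two points you flagged yourself deserve the care you promised them: the identification of the section space with $X_{w(0)}$ (via the trivial fibration $\mm{Map}(\Delta[1],X)\rt\mm{Map}(\{0\},X)$ coming from the left-anodyne $\{0\}\rt\Delta[1]$, under which restriction to $\{1\}$ becomes the monodromy map of Remark \ref{rem:kanfibrationover1}), and the citation for the Kan-fibration recognition lemma over the quasicategory $\cat{W}$; with those supplied, the argument is complete.
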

\begin{proof}
It is clear that (2) implies (3). To see that (1) implies (2), it suffices to prove that any map
$$\xymatrix@R=1.3pc{
\Lambda^n[n]\ar[rr]\ar[rd] & & \Delta[n]\ar[ld]^w\\
 & \cat{C}
}$$
with $w$ taking values in $\cat{W}\subseteq \cat{C}$ is a trivial cofibration in the $W$-local covariant model structure. This follows from an easy inductive argument on $n$: when $n=1$ the map is one of the localizing maps. If all right horn inclusions over $\cat{W}$ are $W$-local trivial cofibrations in dimensions $<n$, then the map $\{n\}\rt \Lambda^n[n]$ is a $W$-local trivial cofibration, being an iterated pushout of such. It therefore suffices to prove that the map $\{n\}\rt \Delta[n]$ is a $W$-local trivial cofibration. But this map factors as $\{n\}\rt \mm{Sp}[n]\rt \Delta[n]$, where the last map is the spine inclusion (hence inner anodyne) and the first map is an iterated pushout of the localizing maps \eqref{eq:localizingmaps}.

Finally, to see that (3) implies (1), suppose that the restriction of $p$ to each $w\colon \Delta[1]\rt \cat{W}\subseteq \cat{C}$ is a Kan fibration. This implies that $p$ has the right lifting property with respect to each pushout-product map
$$\xymatrix@R=1.2pc{
\{1\}\times L\coprod_{\{1\}\times K} \Delta[1]\times K\ar[rr]^-{j}\ar[rd] & & \Delta[1]\times L\ar[ld]^{\pi_1}\\
& \Delta[1]\ar[d]^w & \\
& \cat{C} 
}$$
with $w$ taking values in $\cat{W}$, since the map $j$ is anodyne. But this means precisely that the left fibration $p\colon X\rt \cat{C}$ is local with respect to the localizing map \eqref{eq:localizingmaps}.
\end{proof}
\begin{remark}\label{rem:kanfibrationover1}
Let $X\rt \Delta[1]$ be a left fibration with fibers $X_0, X_1$ over $0$ and $1$. There exists a diagonal lift in the diagram
$$\xymatrix{
X_0\times \{0\}\ar[r]\ar[d] & X\ar[d]\\
X_0\times \Delta[1]\ar@{..>}[ru]\ar[r]_-{\pi} & \Delta[1]
}$$
whose restriction to $X_0\times\{1\}$ yields a map $X_0\rt X_1$, unique up to homotopy. This map $X_0\rt X_1$ is a weak equivalence of Kan complexes if and only if the dotted diagonal map is a fiberwise equivalence from a Kan fibration to a left fibration. In turn, this is equivalent to $X\rt \Delta[1]$ being a Kan fibration.
\end{remark}
\begin{proposition}
Let $(\cat{C}, W)$ be a relative $\infty$-category and let $u\colon \cat{C}\rt \cat{C}[W^{-1}]$ be its localization. Composition with $u$ and base change along $u$ then determine a Quillen equivalence
\begin{equation}\label{eq:equivalencebetweenrelativefunctors}\xymatrix{
u_!\colon \sSet^\mm{cov}/(\cat{C}, W)\ar@<1ex>[r] & \sSet^\mm{cov}/\cat{C}[W^{-1}]\ar@<1ex>[l] \colon u^*.
}\end{equation}
\end{proposition}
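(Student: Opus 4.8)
The plan is to first obtain the Quillen adjunction and then promote it to a Quillen equivalence by recognizing the derived functors as the restriction/left Kan extension adjunction between functor $\infty$-categories. For any map of simplicial sets $f\colon A\rt B$, postcomposition $f_!$ and base change $f^*$ form a Quillen adjunction $f_!\colon \sSet^\mm{cov}/A\rightleftarrows \sSet^\mm{cov}/B\colon f^*$ between the ordinary covariant model structures; this is standard. Taking $f=u$, I would first check that this adjunction descends to the $W$-local covariant model structure on the source. By the universal property of left Bousfield localization it suffices to see that $u^*$ carries fibrant objects to $W$-local left fibrations, and this is exactly where the hypothesis that $u$ inverts $W$ is used: if $Y\rt \cat{C}[W^{-1}]$ is a left fibration and $w\in W$, then $u\circ w$ selects an equivalence in $\cat{C}[W^{-1}]$, so by Remark \ref{rem:kanfibrationover1} the base change $w^*(u^*Y)\rt \Delta[1]$ is a Kan fibration, whence $u^*Y$ is $W$-local by Lemma \ref{lem:characterizingWlocalfibrations}. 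Dually, $\mb{L}u_!=u_!$ sends each localizing map \eqref{eq:localizingmaps} to a covariant equivalence over $\cat{C}[W^{-1}]$, since an inclusion $\{1\}\rt \Delta[1]$ lying over an equivalence is always a covariant equivalence.

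To see that this Quillen adjunction is a Quillen equivalence, it is enough to show that the derived right adjoint $\mb{R}u^*$ is an equivalence of homotopy categories. Here I would pass to the straightening picture: the covariant model structure over any quasicategory $D$ presents the $\infty$-category $\mm{Fun}(D, \sS)$, with left fibrations corresponding to functors and fiberwise equivalences to pointwise equivalences. Combining Lemma \ref{lem:characterizingWlocalfibrations} with Remark \ref{rem:kanfibrationover1}, a left fibration over $\cat{C}$ is $W$-local precisely when its classifying functor $\cat{C}\rt \sS$ inverts every $w\in W$; hence the $W$-local covariant model structure presents the full subcategory $\mm{Fun}_W(\cat{C}, \sS)\subseteq \mm{Fun}(\cat{C}, \sS)$ of $W$-inverting functors. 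Under these identifications the derived adjunction $(\mb{L}u_!, \mb{R}u^*)$ is the left Kan extension / restriction adjunction along $u$.

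With these identifications, the claim reduces to the assertion that restriction
\[
u^*\colon \mm{Fun}(\cat{C}[W^{-1}], \sS)\rt \mm{Fun}(\cat{C}, \sS)
\]
is fully faithful with essential image $\mm{Fun}_W(\cat{C}, \sS)$. This is precisely the universal property of the localization $u$ (Definition \ref{def:localization}) applied with target $\sS$, in its form for functor $\infty$-categories rather than for functor spaces; the upgrade from spaces of functors to functor categories is standard (apply the defining universal property to the cotensors $\cat{D}^K$, or invoke the corresponding statement for $\infty$-categorical localizations). Granting it, $\mb{R}u^*$ is an equivalence from $\ho(\sSet^\mm{cov}/\cat{C}[W^{-1}])$ onto the homotopy category of the Bousfield localization $\sSet^\mm{cov}/(\cat{C}, W)$, namely the full subcategory of $W$-inverting functors. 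A Quillen adjunction whose derived right adjoint is an equivalence of homotopy categories is a Quillen equivalence, so we are done.

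The main obstacle is the bookkeeping of the second and third paragraphs: one has to match the model-categorical derived functors with the $\infty$-categorical restriction and left Kan extension, and in particular check that the derived unit $X\rt u^*(u_!X)^{\mm{fib}}$ computes the unit $F\rt u^*u_!F$ of the $\infty$-categorical adjunction. Concretely this says that for a $W$-inverting $F\colon \cat{C}\rt \sS$ the comparison $F(c)\rt (u_!F)(u(c))$ is an equivalence for every object $c$, i.e.\ that covariant-fibrant replacement of $u_!X$ over $\cat{C}[W^{-1}]$ leaves the fibers over the (shared) objects unchanged. This is the single point at which the universal property of $u$ does real work; once it is in hand the rest is formal.
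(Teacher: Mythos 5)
Your argument is correct in outline, but it takes a genuinely different route from the paper. You reduce the statement, via straightening, to the universal property of $u$ at the level of functor $\infty$-categories: restriction $\mm{Fun}(\cat{C}[W^{-1}],\sS)\rt\mm{Fun}(\cat{C},\sS)$ is fully faithful with essential image the $W$-inverting functors, while Lemma \ref{lem:characterizingWlocalfibrations} and Remark \ref{rem:kanfibrationover1} identify the $W$-local covariant model structure as a presentation of exactly that full subcategory. The paper instead stays entirely inside the covariant model structures: it replaces $\cat{C}[W^{-1}]$ by the strict pushout of $\cat{W}\rt\cat{W}[W^{-1}]$ along $\cat{W}\rt\cat{C}$, invokes the descent (homotopy pullback) Quillen equivalence of \cite{moe16} for the covariant model structure over such a pushout, observes that this homotopy pullback model structure coincides with the one built from the $W$-local structures, and then notes that over $\cat{W}$ and $\cat{W}[W^{-1}]$ everything degenerates to the Kan--Quillen model structure, where $v_!\dashv v^*$ is a Quillen equivalence. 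Your route is shorter and more conceptual once the straightening equivalence is granted, and it makes the ``why'' transparent; but the step you flag yourself --- matching $\mb{R}u^*$ with $\infty$-categorical restriction and the derived unit with the unit of the restriction/Kan-extension adjunction --- is precisely the heavy (though citable) compatibility you are importing, and it is the only place the universal property does real work, just as you say. The paper's proof buys two things by avoiding straightening: it needs no comparison between derived model-categorical functors and $\infty$-categorical ones, and it transfers verbatim to the dendroidal setting of Variant \ref{var:operadic}, where your argument would require an operadic straightening theorem that is not as readily available.
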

\begin{proof}
Since any two models for $\cat{C}[W^{-1}]$ are categorically equivalent under $\cat{C}$ and because the covariant model structure is invariant under categorical weak equivalences (see \cite[Remark 2.1.4.11]{lur09} or \cite[Theorem F]{heu13}), it suffices to prove this in the case where $\cat{C}[W^{-1}]$ is modeled simply by the \emph{strict} pushout
$$\xymatrix{
\cat{W}\ar[r]^-{v}\ar[d]_i & \cat{W}[W^{-1}]\ar[d]^{i'}\\
\cat{C}\ar[r]_-u & \cat{C}[W^{-1}]
}$$
where $\cat{W}\rt \cat{W}[W^{-1}]$ is an anodyne map whose target is a Kan complex. In other words, we do not replace $\cat{C}[W^{-1}]$ by a categorically equivalent quasicategory. To see that the adjunction \eqref{eq:equivalencebetweenrelativefunctors} is a Quillen pair, it suffices to prove that for any left fibration $p\colon X\rt \cat{C}[W^{-1}]$, the base change $u^*(p)$ (which is a left fibration) restricts to a Kan fibration on $\cat{W}$. But this is obvious, since $i^*u^*(p)= v^*i'^*(p)$ and $i'^*(p)$ is a left fibration over a Kan complex, hence a Kan fibration.

To see that $u^*$ is a Quillen equivalence, recall from \cite{moe16} that there is a Quillen equivalence
\begin{equation}\label{eq:pullbackmodeldescent}\xymatrix{
\mm{colim}\colon \sSet^\mm{cov}/\cat{C}\times^h_{\sSet^\mm{cov}/\cat{W}} \sSet^\mm{cov}/\cat{W}[W^{-1}] \ar@<1ex>[r] & \sSet^\mm{cov}/\cat{C}[W^{-1}]\ar@<1ex>[l]\colon \mm{restrict}
}\end{equation}
between the covariant model structure on $\sSet/\cat{C}$ and the left hand \emph{homotopy pullback model structure}; the latter is the category of diagrams of simplicial sets $X_1\lt X_0\rt X_2$ over the diagram $\cat{C}\lt \cat{W}\rt \cat{W}[W^{-1}]$, endowed with the model structure where a trivial fibration is simply an objectwise trivial fibration of diagrams and whose fibrant objects are given by diagrams
$$\xymatrix{
X_1\ar[d] & X_0\ar[l]\ar[r]\ar[d] & X_2\ar[d]\\
\cat{C} & \cat{W}\ar[l]\ar[r] & \cat{W}[W^{-1}]
}$$
whose vertical arrows are left fibrations and such that the two maps from $X_0$ to the pullbacks are covariant weak equivalences. Note that in this situation, the map $X_2\rt \cat{W}[W^{-1}]$ is a left fibration over a Kan complex and therefore a Kan fibration. Since any left fibration which is weakly equivalent to a Kan fibration is itself a Kan fibration, the left fibration $X_0\rt \cat{W}$ is a Kan fibration and the left fibration $X_1\rt \cat{C}$ restricts to a Kan fibration on $\cat{W}$. 

It follows that the homotopy pullback model structure of the covariant model structures is actually the \emph{same} as the homotopy pullback model structure of the $W$-local model structures
$$
\sSet^\mm{cov}/(\cat{C}, W)\times^h_{\sSet^\mm{cov}/(\cat{W}, W)} \sSet^\mm{cov}/\cat{W}[W^{-1}].
$$
But the $W$-local model structure on $\sSet/\cat{W}$ and the covariant model structure on $\sSet/\cat{W}[W^{-1}]$ are simply the Kan-Quillen model structures, so that the Quillen pair
$$\xymatrix{
v_!\colon \sSet^\mm{cov}/(\cat{W}, W)\ar@<1ex>[r] & \sSet^\mm{cov}/\cat{W}[W^{-1}]\ar@<1ex>[l]\colon v^*
}$$
is a Quillen equivalence. This implies that the Quillen pair
\begin{equation}\label{eq:projectionofpullbackmodelstructure}\xymatrix{
\sSet^\mm{cov}/(\cat{C}, W) \ar@<1ex>[r] & \sSet^\mm{cov}/(\cat{C}, W)\times^h_{\sSet^\mm{cov}/(\cat{W}, W)} \sSet^\mm{cov}/\cat{W}[W^{-1}]\ar@<1ex>[l]
}\end{equation}
is a Quillen equivalence. But the original Quillen pair \eqref{eq:equivalencebetweenrelativefunctors} is the composite of the Quillen pairs \eqref{eq:pullbackmodeldescent} and \eqref{eq:projectionofpullbackmodelstructure}.
\end{proof}
\begin{variant}\label{var:operadic}
The above results apply not only to relative $\infty$-categories, but to relative $\infty$-operads as well. Indeed, suppose that $\cat{O}$ is an $\infty$-operad, modeled by a fibrant dendroidal set (see \cite{cis09}) and let $\cat{O}_{\langle 1\rangle}$ be its underlying quasicategory (i.e.\ $\cat{O}_{\langle 1\rangle}=i^*\cat{O}$ in the notation of \cite{cis09}). A relative $\infty$-operad $(\cat{O}, W)$ is an $\infty$-operad, together with a class of arrows making $(\cat{O}_{\langle 1\rangle}, W)$ a relative $\infty$-category. The localization $\cat{O}\rt \cat{O}[W^{-1}]$ is the universal $\infty$-operad under $\cat{O}$ in which the arrows from $W$ are inverted (i.e.\ invertible in the underlying $\infty$-category $\cat{O}[W^{-1}]_{\langle 1\rangle}$). It admits an explicit description like in Remark \ref{rem:localizationaspushout}, as the pushout of $\cat{W}\rt \cat{O}$ along the map from the quasicategory $\cat{W}$ to its Kan fibrant replacement $\cat{W}[W^{-1}]$.

Now consider the covariant model structure on $\cat{dSet}/\cat{O}[W^{-1}]$, which is invariant under replacing $\cat{O}[W^{-1}]$ by an equivalent dendroidal set by \cite[Theorem 6.8]{heu11}. The above argument asserts that this model category is Quillen equivalent to the $W$-local covariant model structure on $\cat{dSet}/\cat{O}$, given by left Bousfield localization at the maps
$$\xymatrix@R=1.2pc{
\{1\}\ar[rr]\ar[rd] & &  \Delta[1]\ar[ld]^w\\
& \cat{O}
}$$
where the map $w\colon \Delta[1]\rt \cat{O}$ is contained in $W$. Using that the covariant model structure on $\cat{dSet}/\cat{O}$ is a model for the $\infty$-category of $\cat{O}$-algebras in spaces (see \cite{heu11}), this result can be interpreted as follows: the $\infty$-category of $\cat{O}[W^{-1}]$-algebras (in spaces) is equivalent to the $\infty$-category of $\cat{O}$-algebras (in spaces) with the property that the unary operations in $W$ act on them by equivalences.
\end{variant}
\begin{corollary}\label{cor:fullyfaithfuloutoflocalization}
Let $(\cat{C}, W)$ be a relative $\infty$-category and let $f\colon \cat{C}\rt \cat{D}$ be a functor of $\infty$-categories sending the maps in $W$ to equivalences in $\cat{D}$. Then the following two statements are equivalent:
\begin{enumerate}
\item the induced functor $\cat{C}[W^{-1}]\rt \cat{D}$ is fully faithful.
\item for each object $c$, the square
\begin{equation}\label{diag:mappingspaces}\vcenter{\xymatrix{
c/\cat{C}\ar[r]^f\ar[d] & f(c)/\cat{D}\ar[d]\\
\cat{C}\ar[r]_f & \cat{D}
}}\end{equation}
induces a $W$-local equivalence of left fibrations $c/\cat{C}\rt f^*\big(f(c)/\cat{D}\big)$ over $\cat{C}$, whose target is a $W$-local left fibration.
\end{enumerate}
\end{corollary}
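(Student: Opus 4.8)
The plan is to reduce both conditions to a single statement about coslice (representable) left fibrations, and then transport that statement across the Quillen equivalence $u_!\dashv u^*$ of the preceding proposition. Write $\ol{f}\colon \cat{C}[W^{-1}]\rt \cat{D}$ for the functor induced by the universal property of $u$, so that $\ol{f}\circ u\simeq f$. Recall the standard characterization of fully faithfulness in terms of representable left fibrations: a functor $g\colon \cat{A}\rt \cat{B}$ is fully faithful if and only if for every object $a$ the comparison map $a/\cat{A}\rt g^*\big(g(a)/\cat{B}\big)$ over $\cat{A}$ is a covariant equivalence (equivalently, a fiberwise equivalence of left fibrations). Applied to $\ol{f}$, condition (1) is therefore equivalent to asking that for each $c$ the map
$$\xymatrix{c/\cat{C}[W^{-1}]\ar[r] & \ol{f}^*\big(f(c)/\cat{D}\big)}$$
be a covariant equivalence of left fibrations over $\cat{C}[W^{-1}]$.

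Next I would transport this along $u^*$. Since $u^*$ is a right Quillen equivalence and every object of these covariant model categories is cofibrant, $u^*$ both preserves and reflects weak equivalences between fibrant objects; moreover $c/\cat{C}[W^{-1}]$ and $\ol{f}^*(f(c)/\cat{D})$ are left fibrations over $\cat{C}[W^{-1}]$, so their images under $u^*$ are $W$-local left fibrations over $\cat{C}$. Using $\ol{f}\circ u\simeq f$ one has $u^*\ol{f}^*(f(c)/\cat{D})\simeq f^*(f(c)/\cat{D})$, so the displayed coslice map pulls back to a map
$$\xymatrix{u^*\big(c/\cat{C}[W^{-1}]\big)\ar[r] & f^*\big(f(c)/\cat{D}\big)}$$
of $W$-local left fibrations over $\cat{C}$, which is a $W$-local equivalence precisely when the original map over $\cat{C}[W^{-1}]$ is a covariant equivalence.

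It remains to connect this to the map appearing in condition (2). The key point is that the unit $c/\cat{C}\rt u^*(c/\cat{C}[W^{-1}])$ is a $W$-local fibrant replacement: it is the derived unit of the Quillen equivalence at the cofibrant object $c/\cat{C}$, and $u_!(c/\cat{C})$ (just postcomposition with $u$) is covariantly equivalent over $\cat{C}[W^{-1}]$ to the coslice $c/\cat{C}[W^{-1}]$, since the left Kan extension along $u$ of the copresheaf corepresented by $c$ is corepresented by $u(c)=c$. Because the square \eqref{diag:mappingspaces} makes the triangle $c/\cat{C}\rt u^*(c/\cat{C}[W^{-1}])\rt f^*(f(c)/\cat{D})$ commute, the $2$-out-of-$3$ property shows that the map $c/\cat{C}\rt f^*(f(c)/\cat{D})$ of condition (2) is a $W$-local equivalence if and only if the transported coslice map is. Finally, the target $f^*(f(c)/\cat{D})$ is always a $W$-local left fibration: for $w\colon \Delta[1]\rt \cat{C}$ in $W$ the arrow $f(w)$ is an equivalence in $\cat{D}$, so by Remark \ref{rem:kanfibrationover1} the base change $w^*f^*(f(c)/\cat{D})\rt \Delta[1]$ is a Kan fibration, whence the claim by Lemma \ref{lem:characterizingWlocalfibrations}(3). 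Chaining the three equivalences yields (1) $\Leftrightarrow$ (2).

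I expect the main obstacle to be the bookkeeping in the last paragraph: verifying that $u^*$ applied to the coslice $c/\cat{C}[W^{-1}]$ is genuinely a $W$-local fibrant replacement of $c/\cat{C}$, i.e.\ identifying the derived unit with the comparison induced by the square \eqref{diag:mappingspaces}, and checking that all comparison maps are compatible so that the $2$-out-of-$3$ argument applies to the correct triangle. The rest is a formal consequence of the Quillen equivalence together with the representable description of fully faithful functors.
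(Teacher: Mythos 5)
Your proof is correct and follows essentially the same route as the paper's: characterize fully faithfulness of the induced functor via the coslice left fibrations over $\cat{C}[W^{-1}]$, pull back along $u$ using the Quillen equivalence $u_!\dashv u^*$, identify $c/\cat{C}\rt u^*(c/\cat{C}[W^{-1}])$ with the derived unit (hence a $W$-local equivalence), and conclude by $2$-out-of-$3$. The only cosmetic difference is that you verify the $W$-locality of $f^*(f(c)/\cat{D})$ directly via Lemma \ref{lem:characterizingWlocalfibrations}, whereas the paper obtains it for free from $u^*$ being right Quillen.
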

\begin{proof}
The functor $f$ is homotopic to a composition of the functors $u\colon \cat{C}\rt \cat{C}[W^{-1}]$ and $g\colon \cat{C}[W^{-1}]\rt \cat{D}$. Since homotopic maps induce equivalent pullbacks of left fibrations, we may assume that $f=gu$. It that case, the square \eqref{diag:mappingspaces} decomposes into two such squares, involving $u(c)/\cat{C}[W^{-1}]$. The functor $g$ is fully faithful if and only if for each $c'\in \cat{C}[W^{-1}]$, the map $c'/\cat{C}[W^{-1}]\rt g^*(g(c')/\cat{D})$ is a fiberwise equivalence of left fibrations over $\cat{C}[W^{-1}]$. Since the functor $u$ is a bijection on vertices (Remark \ref{rem:localizationaspushout}), we see that $g$ is fully faithful if and only if the second map in the composite
$$\xymatrix{
c/\cat{C}\ar[r] & u^*(u(c)/\cat{C}[W^{-1}]) \ar[r] & f^*(f(u)/\cat{D})
}$$
is a fiberwise equivalence of $W$-local left fibrations over $\cat{C}$. But this is equivalent to the composite map being a $W$-local covariant weak equivalence. Indeed, using that the left fibration $u(c)/\cat{C}[W^{-1}]\rt \cat{C}[W^{-1}]$ is covariantly equivalent to the inclusion $\{u(c)\}\rt \cat{C}[W^{-1}]$, one sees that the first map in the above composite is a model for the derived unit map of the adjunction \eqref{eq:equivalencebetweenrelativefunctors} and therefore a $W$-local weak equivalence.
\end{proof}
\begin{corollary}\label{cor:mappingspacesfromfibrantreplacement}
Let $(\cat{C}, W)$ be a relative category and let $c, d\in \cat{C}$ be two objects in $\cat{C}$. Let $p\colon X\rt \cat{C}$ be a fibrant replacement of the map $\{c\}\rt \cat{C}$ in the $W$-local covariant model structure. Then the fiber $X_d=p^{-1}(d)$ is a model for the mapping space $\Map_{\cat{C}[W^{-1}]}(c, d)$.
\end{corollary}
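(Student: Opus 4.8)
The plan is to combine the preceding Proposition, which gives a Quillen equivalence $u_!\dashv u^*$ between $\sSet^{\mm{cov}}/(\cat{C}, W)$ and $\sSet^{\mm{cov}}/\cat{C}[W^{-1}]$, with the standard straightening/unstraightening dictionary that identifies fibers of left fibrations with mapping spaces. The key point is that the representable left fibration $\{c\}\rt \cat{C}$ has fibrant replacement in the $W$-local covariant model structure corresponding, under the Quillen equivalence, to the representable left fibration over $\cat{C}[W^{-1}]$ whose fibers compute the mapping spaces out of $u(c)$.

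First I would recall that the ordinary slice projection $u(c)/\cat{C}[W^{-1}]\rt \cat{C}[W^{-1}]$ is a left fibration whose straightening is the corepresentable functor $\Map_{\cat{C}[W^{-1}]}(u(c), -)$; in particular its fiber over $d$ is a model for $\Map_{\cat{C}[W^{-1}]}(c, d)$ (recall from Remark \ref{rem:localizationaspushout} that $u$ is the identity on objects, so we may write $u(c)=c$ and $u(d)=d$). This left fibration is covariantly equivalent over $\cat{C}[W^{-1}]$ to the inclusion $\{c\}\rt \cat{C}[W^{-1}]$, so it is a fibrant replacement of $\{c\}\rt \cat{C}[W^{-1}]$ in the covariant model structure on $\sSet^{\mm{cov}}/\cat{C}[W^{-1}]$.

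Next I would transport this across the Quillen equivalence. Since $u$ is the identity on vertices, the inclusion $\{c\}\rt \cat{C}$ satisfies $u_!(\{c\}\rt \cat{C})=(\{c\}\rt \cat{C}[W^{-1}])$. As $u_!$ is a left Quillen equivalence and $\{c\}\rt \cat{C}$ is cofibrant, the derived unit $\{c\}\rt u^*u_!(\{c\})$ is a $W$-local covariant equivalence; concretely, applying $u^*$ to a fibrant replacement of $\{c\}\rt \cat{C}[W^{-1}]$ yields a fibrant replacement of $\{c\}\rt \cat{C}$ in the $W$-local model structure. Taking the fibrant replacement on the target side to be $c/\cat{C}[W^{-1}]$, I obtain that $u^*\big(c/\cat{C}[W^{-1}]\big)$ is a $W$-local fibrant replacement $X$ of $\{c\}\rt \cat{C}$. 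Because base change along $u$ preserves fibers over vertices and $u(d)=d$, the fiber $X_d$ agrees with the fiber of $c/\cat{C}[W^{-1}]$ over $d$, which is $\Map_{\cat{C}[W^{-1}]}(c, d)$. Finally, since fibrant replacements in a model structure are unique up to weak equivalence and the fibers of $W$-local left fibrations over a fixed vertex are Kan complexes that are invariant (up to homotopy equivalence) under $W$-local equivalences between them, any other choice of fibrant replacement $p\colon X\rt \cat{C}$ has its fiber $X_d$ weakly equivalent to this one, hence is itself a model for $\Map_{\cat{C}[W^{-1}]}(c, d)$.

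The main obstacle is the final invariance claim: I must check that a $W$-local covariant equivalence between two $W$-local left fibrations restricts to a weak homotopy equivalence on the fiber over the vertex $d$. This is where the hypotheses genuinely enter — for the \emph{covariant} model structure, a covariant equivalence between left fibrations need not be a fiberwise equivalence over an arbitrary base, but it is so when we may treat the relevant arrows as equivalences. The cleanest route is to invoke the Quillen equivalence of the Proposition directly: both candidate fibrant replacements are models for $u^*$ of the same object, so they are $W$-locally equivalent left fibrations, and $W$-local left fibrations are fibrant, so the equivalence between them is a weak equivalence between fibrant objects. Over $\cat{C}[W^{-1}]$ such an equivalence of left fibrations is a genuine fiberwise (hence fiberwise-over-$d$) homotopy equivalence by the usual covariant straightening, and pulling back along $u$ (which fixes $d$) preserves the fiber, giving the required identification of $X_d$.
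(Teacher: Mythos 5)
Your proof is correct and takes essentially the same route as the paper's: both identify $u^*\big(u(c)/\cat{C}[W^{-1}]\big)$ as a $W$-local fibrant replacement of $\{c\}\rt \cat{C}$ (via the Quillen equivalence of the preceding Proposition and the derived unit), observe that its fiber over $d$ is a model for $\Map_{\cat{C}[W^{-1}]}(c,d)$, and conclude by uniqueness of fibrant replacements. Your final invariance step can be streamlined: a $W$-local equivalence between $W$-local left fibrations is automatically a covariant equivalence between left fibrations over $\cat{C}$, hence a fiberwise homotopy equivalence, without needing to transport back to $\cat{C}[W^{-1}]$.
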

\begin{proof}
This follows immediately from the fact that $\{c\}\rt c/\cat{C}\rt u^*(u(c)/\cat{C}[W^{-1}])$ is a fibrant replacement of $\{c\}\rt \cat{C}$ in the $W$-local model structure, together with the fact that the fibers of $u^*(u(c)/\cat{C}[W^{-1}])\rt \cat{C}$ over an object $d$ are a model for $\Map_{\cat{C}[W^{-1}]}(c, d)$.
\end{proof}
\begin{remark}
Let $(\cat{C}, W)$ be a relative $\infty$-category and let $\mb{D}$ be the full simplicial subcategory of $\sSet/\cat{C}$ on the $W$-local left fibrations $X\rt \cat{C}$ over $\cat{C}$ that are weakly representable, i.e.\ for which there exists an object $x\in \cat{C}$ and a $W$-local equivalence $\{x\}\rt X$ over $\cat{C}$. The fibrant simplicial category $\mb{D}$ is Dwyer-Kan equivalent to the simplicial category of left fibrations over $\cat{C}[W^{-1}]$ that are weakly representable. Consequently, $\mb{D}$ provides a model for $\cat{C}[W^{-1}]$ by the Yoneda lemma \cite[Proposition 5.1.3.1]{lur09}. 
\end{remark}

\section{Localization categories with hypercovers}\label{sec:localizinghypercovers}
In this section we will apply the results of the previous section in the case where $(\cat{C}, W)$ is a relative $\infty$-category in which the set of maps $W$ is stable under base change. In this case, we will give a description of the mapping spaces in $\cat{C}[W^{-1}]$ in terms of `cocycle categories', in the terminology of \cite{jar06} (see Corollary \ref{cor:mappingspaceoffractioncat}). 
\begin{definition}\label{def:categorywithhypercovers}
A relative $\infty$-category $(\cat{C}, W)$ is called an \emph{$\infty$-category with hypercovers} if every solid diagram
$$\xymatrix{
D'\ar@{..>}[r]^{w'}\ar@{..>}[d] & D\ar[d]\\
C'\ar[r]_{w\in W} & C
}$$
in which the bottom map $w$ is contained in $W$, admits a pullback as indicated such that the map $w'$ is contained in $W$. In this case, we will usually call the maps in $W$ hypercovers and denote them by $\rto{\sim}$.
\end{definition}
\begin{example}
Any category of fibrant objects in the sense of Brown \cite{bro73} is a category with hypercovers, where the hypercovers are the acyclic fibrations. Note that the acyclic fibrations generate the weak equivalences under the 2-out-of-3 property (so that localizing at both sets of maps yields the same result), but that the weak equivalences need not be stable under pullback.
\end{example}
\begin{remark}
Any $\infty$-category can be obtained by localizing an (ordinary) category with hypercovers. Indeed, for any $\infty$-category $\cat{C}$, consider the full subcategory $\mb{D}\subseteq \cat{sSet}/\cat{C}$ of left fibrations $X\rt \cat{C}$ that are weakly representable, i.e.\ for which there exists an object $x\in \cat{C}$ together with a covariant weak equivalence $\{x\}\rt X$ over $\cat{C}$. Together with the trivial fibrations between such left fibrations in $\mb{D}$, the category $\mb{D}$ forms a category with hypercovers, whose localization $\mb{D}[W^{-1}]$ is equivalent to $\cat{C}$ by the Yoneda lemma \cite[Proposition 5.1.3.1]{lur09}.
\end{remark}
\begin{example}\label{ex:sspace}
Let $\cat{C}=\Fun(N(\mb{\Delta})^\mm{op}, \sS)$ be the $\infty$-category of simplicial spaces. This is an $\infty$-category with hypercovers, with hypercovers given by the trivial Kan fibrations, i.e.\ those maps $Y\rt X$ of simplicial spaces for which the relative (homotopy) matching maps $Y_n\rt \mm{M}_nY\times_{\mm{M}_nX} X_n$ induce surjections on path components. If one presents $\cat{C}$ by the Reedy model structure on the category $\Fun(\mb{\Delta}^\mm{op}, \sSet)$ of bisimplicial sets, then a Reedy fibration $p\colon Y\rt X$ presents a hypercover in $\Fun(N(\mb{\Delta})^\mm{op}, \sS)$ if and only if its image under the functor
$$\xymatrix{
\Fun(\mb{\Delta}^\mm{op}, \mm{ev}_n)\colon \Fun(\mb{\Delta}^\mm{op}, \sSet)\ar[r] & \Fun(\mb{\Delta}^\mm{op}, \Set)
}$$
is a trivial fibration of simplicial sets for each $n\geq 0$. 

The associated localization $\Fun(N(\mb{\Delta})^\mm{op}, \sS)[W^{-1}]$ is quite unwieldy, but one gets a simple result if one restricts attention to the full subcategory $\cat{Kan}\subseteq \Fun(N(\mb{\Delta})^\mm{op}, \sS)$ on those simplicial spaces $X$ that satisfy the \emph{Kan condition}, i.e.\ whose (homotopy) matching maps $X_n\rt \mm{M}_{\Lambda^i[n]}X$ induce surjections on path components for each horn inclusion. The natural map from such a simplicial space $X$ to the constant diagram on its (homotopy) colimit turns out to be a hypercover (as we will discuss in \cite{nui16b}). From this it follows easily that the constant simplicial diagrams form a right deformation retract of the relative $\infty$-category $\cat{Kan}$, so that $\cat{Kan}[W^{-1}]\simeq \sS$.
\end{example}
\begin{example}\label{ex:algebras}
Let $\cat{O}$ be a $1$-coloured $\infty$-operad and let $\cat{C}=\Fun(N(\mb{\Delta})^\mm{op}, \cat{Alg}_{\cat{O}}(\sS))$ be the $\infty$-category of simplicial diagrams in the $\infty$-category of $\cat{O}$-algebras in spaces. If one describes $\cat{O}$ by a fibrant dendroidal set, then one may model $\cat{C}$ by the Reedy model structure on $\mm{Fun}(\mb{\Delta}^\mm{op}, \cat{dSet}^\mm{cov}/\cat{O})$ with respect to the covariant model structure on $\cat{dSet}/\cat{O}$ (see Variant \ref{var:operadic}). The $\infty$-category $\cat{C}$ is an $\infty$-category with hypercovers, with the hypercovers given by those maps of simplicial $\cat{O}$-algebras whose underlying map of simplicial spaces is a trivial Kan fibration. The resulting $\infty$-category with hypercovers can be used to study simplicial resolutions of $\cat{O}$-algebras by \emph{free} $\cat{O}$-algebras.

As in Example \ref{ex:sspace}, to get a well-behaved localization one should restrict attention to the full subcategory $\cat{Kan}_{\cat{O}}\subseteq \Fun(N(\mb{\Delta})^\mm{op}, \cat{Alg}_{\cat{O}}(\sS))$ on the simplicial $\cat{O}$-algebras whose underlying simplicial space is a Kan complex. In that case, the localization $\cat{Kan}_{\cat{O}}[W^{-1}]\simeq \cat{Alg}_{\cat{O}}$ is equivalent to the original $\infty$-category of $\cat{O}$-algebras. Indeed, since the forgetful functor $\cat{Alg}_{\cat{O}}(\sS)\rt \sS$ preserves colimits of simplicial diagrams, it follows from Example \ref{ex:sspace} that the map from a simplicial diagram in $\cat{Kan}_{\cat{O}}$ to the constant diagram on its colimit is a hypercover.
\end{example}
\begin{example}
One can also consider more geometric variations of Example \ref{ex:sspace}: for example, one can take $\cat{C}=\Fun(\mb{\Delta}^\mm{op}, \cat{Mfd})$ to be the category of simplicial manifolds, with the hypercovers given by those maps of simplicial manifolds $Y\rt X$ whose relative matching maps $Y_n\rt \mm{M}_nY\times_{\mm{M}_nX} X_n$ are surjective submersions. In the setting of derived algebraic geometry, one can take $\cat{C}$ to be the $\infty$-category of simplicial derived schemes, with hypercovers given by those maps whose relative matching maps are smooth (or \'etale) surjections. Again, the localizations of these categories at their hypercovers can be somewhat complicated, but become easier if one considers only those  simplicial objects satisfying a (truncated) version of the Kan condition (i.e.\ the subcategories of Lie $n$-groupoids or derived Artin $n$-groupoids). We will come back to this in \cite{nui16b}.
\end{example}

Let $(\cat{C}, W)$ be an $\infty$-category with hypercovers. The space of maps $\Map_{\cat{C}[W^{-1}]}(c, d)$ admits the following description, which resembles the description in the 1-categorical setting from \cite{cis10b, jar06}: the space $\Map_{\cat{C}[W^{-1}]}(c, d)$ can be obtained as the group completion (i.e.\ the Kan fibrant replacement) of the $\infty$-category of spans
$$\xymatrix{
c & \tilde{c}\ar[r]\ar[l]_\sim & d
}$$
where $\tilde{c}\rt c$ is contained in $W$, where a morphism is (roughly) a commuting diagram of the form
$$\xymatrix@R=0.6pc{
 & \tilde{c}\ar[ld]_{\sim}\ar[dd]\ar[rd] & \\
c & & d.\\
& \tilde{c}'\ar[lu]^{\sim}\ar[ru]
}$$
One expects this space to depend functorially on the object $d\in \cat{C}$, simply by postcomposing spans with maps $d\rt d'$ in $\cat{C}$. To make this more precise, let $c$ an object in the $\infty$-category $\cat{C}$ and let
$$
\cat{C}_W^{/c}\subseteq \cat{C}^{/c}:=\mm{Fun}(\Delta[1], \cat{C})\times_{\mm{Fun}(\{1\}, \cat{C})} \{c\}
$$
denote the full sub-$\infty$-category of the (alternative) slice category $\cat{C}^{/c}$ on those arrows $\tilde{c}\rt c$ that are contained in $W$. 
\begin{definition}\label{def:spanfibration}
Let $(\cat{C}, W)$ be a relative $\infty$-category and let $c\in \cat{C}$. Define $\pi\colon H(c)\rt \cat{C}$ to be the map of quasicategories
$$\xymatrix{
\pi\colon H(c):= \cat{C}_W^{/c}\times_{\Fun(\dau_2\Delta[2], \cat{C})} \Fun(\Lambda^0[2], \cat{C})\ar[r]^-{\mm{ev}_2} & \cat{C} 
}$$
which sends a span $c\lt \tilde{c}\rt d$ with $\tilde{c}\rt c$ in $W$ to its endpoint $d$.
\end{definition}
\begin{remark}\label{rem:relativespans}
For any two objects $c, d\in \cat{C}$, let us denote by $\mm{Span}^W_{\cat{C}}(c, d)$ the fiber of the map $\pi\colon H(c)\rt \cat{C}$ over $d$. More concretely, the quasicategory $\mm{Span}^W_{\cat{C}}(c, d)$ is the full sub-$\infty$-category
$$
\mm{Span}^W_{\cat{C}}(c, d)\subseteq \mm{Fun}(\Lambda^0[2], \cat{C})\times_{\mm{Fun}(\{1, 2\}, \cat{C})} \{(c, d)\}
$$
consisting of those spans $c\lt \tilde{c}\rt d$ for which the left map is contained in $W$. If $W$ contains all maps in $\cat{C}$ (resp.~ only the equivalences), we will denote this $\infty$-category simply by $\mm{Span}_{\cat{C}}(c, d)$ (resp.~ $\mm{Span}^\mm{eq}_{\cat{C}}(c, d)$).
\end{remark}
Although the fibers of the map $\pi\colon H(c)\rt \cat{C}$ are quasicategories, rather than Kan complexes, we will argue that $\pi$ is nonetheless quite close to a fibrant replacement of the map $\{c\}\rt \cat{C}$ in the $W$-local covariant model structure on $\sSet/\cat{C}$. As a start, we show that $\pi$ is indeed weakly equivalent to the inclusion $\{c\}\rt \cat{C}$ in the $W$-local covariant model structure.
\begin{lemma}\label{lem:leftkanbycodomainfib}
Let $f\colon \cat{D}\rt \cat{C}$ be a map of quasicategories and consider the diagram
\begin{equation}\label{diag:degeneracycovwe}\vcenter{\xymatrix{
\cat{D}\ar[rd]_f\ar[rr]^-{(\mm{id}, s_0)}_>{\;}="s" & & P_f:=\mm{Fun}(\{0\}, \cat{D})\times_{\Fun(\{0\}, \cat{C})} \Fun(\Delta[1], \cat{C})\\
& \cat{C}\ar"s";[]^\pi
}}\end{equation}
where $\pi$ is given by evaluation at $1$. Then $\pi$ is a cocartesian fibration and $(\mm{id}, s_0)$ is a covariant weak equivalence over $\cat{C}$.
\end{lemma}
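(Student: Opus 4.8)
The statement has two independent parts, and I would treat them separately.

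\textbf{(1) $\pi$ is a cocartesian fibration.} The map $\pi$ is the composite $P_f \xrightarrow{\mm{pr}_2}\mm{Fun}(\Delta[1],\cat C)\xrightarrow{\mm{ev}_1}\cat C$, and $P_f$ is by definition the pullback of the source evaluation $\mm{ev}_0\colon\mm{Fun}(\Delta[1],\cat C)\rt\cat C$ along $f$. I would first record that $\pi$ is an inner fibration, being assembled from pullbacks and composites of the inner fibrations $\mm{ev}_0,\mm{ev}_1$. The heart is producing cocartesian lifts: given an object $(d,\phi\colon f(d)\rt c)$ of $P_f$ and an edge $g\colon c\rt c'$ of $\cat C$, I would take the edge $(d,\phi)\rt(d,g\circ\phi)$ whose $\cat D$-component is the degeneracy $\mm{id}_d$ and whose $\mm{Fun}(\Delta[1],\cat C)$-component is the square with top edge $\mm{id}_{f(d)}$ and bottom edge $g$ (i.e.\ postcomposition by $g$). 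To see this edge is $\pi$-cocartesian I would verify the mapping-space criterion \cite{lur09}: for every test object the comparison square of mapping spaces is a homotopy pullback, which reduces to the standard fact that postcomposition squares are $\mm{ev}_1$-cocartesian in $\mm{Fun}(\Delta[1],\cat C)$, the mapping space in the $\cat D$-direction being unaffected. Conceptually this is simply the statement that $(\mm{ev}_0,\mm{ev}_1)\colon\mm{Fun}(\Delta[1],\cat C)\rt\cat C\times\cat C$ is a bifibration and that pulling back the \emph{source} coordinate along $f$ preserves cocartesianness in the \emph{target} coordinate; I would cite this and record that the $\pi$-cocartesian edges are exactly those projecting to an $\mm{ev}_1$-cocartesian square and to an equivalence in $\cat D$.

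\textbf{(2) $(\mm{id},s_0)$ is a covariant weak equivalence over $\cat C$.} Write $\alpha=(\mm{id},s_0)$; it sends $d$ to $(d,\mm{id}_{f(d)})$ and lies over $\cat C$, since both structure maps compose to $f$. Because the covariant model structure on $\sSet/\cat C$ is simplicial with every object cofibrant and its fibrant objects the left fibrations, $\alpha$ is a covariant equivalence if and only if for every left fibration $L\rt\cat C$ the restriction $\alpha^*\colon\Map_{/\cat C}(P_f,L)\rt\Map_{/\cat C}(\cat D,L)$ is a homotopy equivalence. The target is the space of sections $\Gamma_{\cat D}(f^*L)$ of the pulled-back left fibration. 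For the source I would use the projection $q\colon P_f\rt\cat D$, whose fiber over $d$ is the coslice $\{f(d)\}\times_{\cat C}\mm{Fun}(\Delta[1],\cat C)$, a left fibration over $\cat C$ (via $\mm{ev}_1$) corepresenting $\Map_{\cat C}(f(d),-)$. The inclusion of its initial object $\mm{id}_{f(d)}$ is a covariant equivalence $\{f(d)\}\rt f(d)/\cat C$ over $\cat C$, so restriction along it yields $\Map_{/\cat C}(f(d)/\cat C,L)\simeq L_{f(d)}$. Since $\alpha$ is precisely the fibrewise inclusion of these cone points, $\alpha^*$ is identified with the fibrewise corepresentability equivalence.

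The main obstacle is the final assembly step. Because $q\colon P_f\rt\cat D$ is \emph{not} a map over $\cat C$ (indeed the evident contraction of $P_f$ onto $\cat D$ moves the target vertex and is not fibrewise over $\cat C$), one cannot pass formally from the fibrewise statement to the global one, nor invoke two-out-of-three against $q$. I would make the assembly rigorous by globalizing the corepresentability: the characterization that $\mm{Fun}(\Delta[1],L)\rt L\times_{\cat C,\mm{ev}_0}\mm{Fun}(\Delta[1],\cat C)$ is a trivial Kan fibration for any left fibration $L$ \cite{lur09} lets one identify $\Map_{/\cat C}(P_f,L)$ with $\Gamma_{\cat D}(f^*L)$ compatibly with $\alpha^*$, upgrading the fibrewise equivalence to the desired global one. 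This trivial-fibration input, rather than any naive homotopy, is what carries the argument.
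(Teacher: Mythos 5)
Your part (1) is correct and is essentially a by-hand proof of what the paper handles by citation: the paper invokes (the dual of) \cite[Corollary 2.4.7.12]{lur09}, i.e.\ exactly the bifibration property of $(\mm{ev}_0,\mm{ev}_1)$ on $\Fun(\Delta[1],\cat{C})$ and its stability under base change in the source coordinate that you describe, including your identification of the cocartesian edges as postcomposition squares.

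For part (2) the paper argues differently, and your argument has a genuine gap at exactly the step you flag. The paper notes that $(\mm{id},s_0)$ has the retraction $r=\mm{pr}_1$ and constructs an explicit homotopy $\Delta[1]\times P_f\rt P_f$ from $(\mm{id},s_0)\circ r$ to the identity, relative to $\cat{D}$, as the base change of the homotopy on $\Fun(\Delta[1],\cat{C})$ adjoint to $(s,t,\phi)\mapsto\phi(\min(s,t))$; by \cite[Lemma 2.7]{heu13} such a covariantly directed deformation retraction exhibits $(\mm{id},s_0)$ as a covariant trivial cofibration. Your reduction to showing that $\alpha^*\colon\Map_{/\cat{C}}(P_f,L)\rt\Map_{/\cat{C}}(\cat{D},L)$ is an equivalence for every left fibration $L\rt\cat{C}$ is legitimate, and the trivial fibration $t\colon\Fun(\Delta[1],L)\rt L\times_{\cat{C},\mm{ev}_0}\Fun(\Delta[1],\cat{C})$ does let you extend any section of $f^*L$ to a map $P_f\rt L$ over $\cat{C}$, i.e.\ it produces a section of $\alpha^*$ on vertices. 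But it does not by itself ``identify'' the two mapping spaces. Since $\alpha$ is a cofibration, $\alpha^*$ is a Kan fibration, and to conclude it is a weak equivalence you must solve the relative extension problems along $\cat{D}\times\Delta[n]\cup P_f\times\partial\Delta[n]\hookrightarrow P_f\times\Delta[n]$ for $n>0$ (or, equivalently, show your vertex-level section is a two-sided homotopy inverse). For $n>0$ the prescribed datum on $P_f\times\partial\Delta[n]$ is a map to $L$, not to $\Fun(\Delta[1],L)$, and promoting it to a lift through $t$ compatibly with the degenerate lift over $\cat{D}\times\Delta[n]$ is a lifting problem against $(\mm{ev}_0,\mm{ev}_1)\colon\Fun(\Delta[1],L)\rt L\times_{\cat{C}}\Fun(\Delta[1],\cat{C})\times_{\cat{C}}L$, which is \emph{not} a trivial fibration: the relevant pushout-products $\partial\Delta[1]\times\Delta[m]\cup\Delta[1]\times\partial\Delta[m]\rt\Delta[1]\times\Delta[m]$ are not left anodyne. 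Likewise, to see that the canonical extension of $g\circ\alpha$ is homotopic over $\cat{C}$ to an arbitrary $g\colon P_f\rt L$, the natural move is to transport $g$ along the tautological homotopy $\phi(\min(s,t))$ --- which is precisely the paper's missing ingredient. So the strategy is viable, but the assembly requires either the $\min$-homotopy or the Heuts--Moerdijk deformation-retract criterion; once you add that, your argument closes, at which point the paper's route is the shorter one.
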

\begin{proof}
The map $\pi$ is a cocartesian fibration by (the opposite of) \cite[Corollary 2.4.7.12]{lur09}. The diagram \eqref{diag:degeneracycovwe} can be extended to a diagram
$$\xymatrix{
\cat{D}\ar[r]^-{(\mm{id}, s_0)}\ar[d] & P_f \ar[d]\ar[r]^r & \cat{D}\ar[d]\\
\cat{C}\ar[rd] \ar[r]^-{s_0} & \mm{Fun}(\Delta[1], \cat{C})\ar[r]_-{\mm{ev}_0}\ar[d]^{\mm{ev}_1} & \cat{C}\\
& \cat{C}.
}$$
where both squares are cartesian. In particular, it follows that $(\mm{id}, s_0)$ has a retract $r$, given by the base change of $\mm{ev}_0\colon \mm{Fun}(\Delta[1], \cat{C})\rt \cat{C}$. By \cite[Lemma 2.7]{heu13}, to prove that the map $(\mm{id}, s_0)$ is a covariant trivial cofibration, it suffices to construct a homotopy $\Delta[1]\times H(c)\rt H(c)$ from the composition $(\mm{id}, s_0\mm{ev}_0)$ to the identity map, relative to $\cat{D}$. This homotopy is simply the base change of the homotopy $\Delta[1]\times\cat{C}^{\Delta[1]}\rt \cat{C}^{\Delta[1]}$ from $s_0\mm{ev}_0$ to the identity that is adjoint to the map
$$\xymatrix{
\Delta[1]\times\Delta[1]\times\mm{Fun}(\Delta[1], \cat{C}) \ar[rrr]^-{\mm{min}\times \mm{Fun}(\Delta[1], \cat{C})} & & & \Delta[1]\times \mm{Fun}(\Delta[1], \cat{C})\ar[r]^-{\mm{ev}} & \cat{C}
}$$
where $\mm{min}\colon \Delta[1]\times \Delta[1]\rt \Delta[1]$ is the functor taking the minimum. 
\end{proof}
Since the map $\pi\colon H(c)\rt \cat{C}$ of Definition \ref{def:spanfibration} is precisely the map $\pi\colon P_f\rt \cat{C}$ associated to the functor $f\colon \cat{C}^{/c}_W\rt \cat{C}$ by the above construction, it follows that $\pi$ is a cocartesian fibration.
\begin{lemma}\label{lem:Wlocalequivalence}
Let $(\cat{C}, W)$ be a relative category. The canonical map $\{c\}\rt H(c)$ over $\cat{C}$ corresponding to the constant span $c \lt c \rt c$ is an equivalence in the $W$-local covariant model structure.
\end{lemma}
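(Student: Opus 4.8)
The plan is to reduce the statement, via Lemma \ref{lem:leftkanbycodomainfib}, to a single assertion about transporting across a cylinder whose vertical edges lie in $W$, and then to settle that assertion by the same outer-horn analysis that appears in the proof of Lemma \ref{lem:characterizingWlocalfibrations}. First I would use Lemma \ref{lem:leftkanbycodomainfib}: since $\pi\colon H(c)\rt\cat{C}$ is exactly the map $P_f\rt\cat{C}$ for $f=\mm{ev}_0\colon\cat{C}_W^{/c}\rt\cat{C}$, the covariant equivalence $(\mm{id},s_0)\colon\cat{C}_W^{/c}\rt H(c)$ of that lemma sends the object $\mm{id}_c$ to the constant span $c\lt c\rt c$. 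Hence $\{c\}\rt H(c)$ factors as $\{c\}\rt\cat{C}_W^{/c}\rt H(c)$, where the first map selects $\mm{id}_c$; by $2$-out-of-$3$ it suffices to prove that the inclusion of the terminal object $\{\mm{id}_c\}\rt\cat{C}_W^{/c}$ is a $W$-local covariant equivalence, where $\cat{C}_W^{/c}$ is regarded over $\cat{C}$ via $\mm{ev}_0$ (so that $\mm{id}_c$ lies over $c$, consistently with the basepoint).

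The key geometric input is the tautological natural transformation $\eta\colon\cat{C}_W^{/c}\times\Delta[1]\rt\cat{C}$ from $\mm{ev}_0$ to the constant functor $c$, whose component at an object $w\colon\tilde c\rt c$ is $w$ itself; by construction every component lies in $W$. I would form the cylinder $E=\cat{C}_W^{/c}\times\Delta[1]$ over $\cat{C}$ via $\eta$ and compare its two ends. The inclusion of the $0$-end $\cat{C}_W^{/c}\times\{0\}\rt E$ (structure map $\mm{ev}_0$) is left anodyne, hence a covariant equivalence and \emph{a fortiori} a $W$-local one. The inclusion of the $1$-end $\cat{C}_W^{/c}\times\{1\}\rt E$ (structure map the constant $c$) is the content of the \textbf{Key Claim}: for any simplicial set $\cat{D}$ and any $\alpha\colon\cat{D}\times\Delta[1]\rt\cat{C}$ whose restriction to each $\{x\}\times\Delta[1]$ lies in $W$, the inclusion $\cat{D}\times\{1\}\rt\cat{D}\times\Delta[1]$ is a $W$-local covariant equivalence. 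Granting this (applied to $\cat{D}=\cat{C}_W^{/c}$ and to $\cat{D}=\{\mm{id}_c\}$, where the relevant component is $\mm{id}_c\in W$), and using that $\{\mm{id}_c\}\rt\cat{C}_W^{/c}$ is a weak equivalence of underlying simplicial sets because $\mm{id}_c$ is a terminal object of $\cat{C}_W^{/c}$ (so it is a covariant equivalence over the point $c$, and hence over $\cat{C}$), a short diagram chase comparing the cylinders of $\{\mm{id}_c\}$ and of $\cat{C}_W^{/c}$ yields the desired conclusion by $2$-out-of-$3$.

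The Key Claim is the main obstacle, and it is exactly where stability of $W$ under base change is used: one is forced to cross the cylinder in the ``wrong'', right-anodyne direction, which is only a weak equivalence after the hypercovers are inverted. I would prove it by a skeletal induction on $\cat{D}$, reducing to the cell attached over each nondegenerate simplex $\Delta[k]\rt\cat{D}$, namely the pushout-product of $\partial\Delta[k]\rt\Delta[k]$ with $\{1\}\rt\Delta[1]$ taken over $\alpha$. Decomposing the prism $\Delta[k]\times\Delta[1]$ into its shuffles $\sigma_0,\dots,\sigma_k$ and attaching them in order, the cells $\sigma_0,\dots,\sigma_{k-1}$ are filled by inner horns (inner anodyne, hence covariant equivalences), while the last cell $\sigma_k$ is filled by the outer right horn $\Lambda^{k+1}[k+1]\rt\Delta[k+1]$ whose terminal edge is precisely the vertical edge $\alpha|_{\{x_k\}\times\Delta[1]}\in W$.

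It therefore remains to show that an outer right horn inclusion $\Lambda^m[m]\rt\Delta[m]$ whose terminal edge lies in $W$ is a $W$-local trivial cofibration. The cleanest route is through the Quillen equivalence \eqref{eq:equivalencebetweenrelativefunctors}: the functor $u$ carries the terminal edge to an equivalence in $\cat{C}[W^{-1}]$, and since $u_!$ is the left adjoint of a Quillen equivalence between categories in which every object is cofibrant, it reflects weak equivalences; one is thus reduced to the standard fact that over any $\infty$-category an outer right horn inclusion with invertible terminal edge is a covariant equivalence. One can also argue directly that a $W$-local left fibration admits the requisite lifts, using that transport along a $W$-edge is invertible on fibers by Lemma \ref{lem:characterizingWlocalfibrations} and Remark \ref{rem:kanfibrationover1}.
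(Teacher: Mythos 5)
Your argument is correct, and after the first step it genuinely diverges from the paper's. Both proofs begin identically, using Lemma \ref{lem:leftkanbycodomainfib} to reduce to showing that $\{\mm{id}_c\}\rt \cat{C}_W^{/c}$ is a $W$-local equivalence over $\cat{C}$ via $\mm{ev}_0$. From there the paper takes a shortcut you did not: it observes that $\cat{C}_W^{/c}$ in fact lives over the subcategory $\ol{\cat{W}}\subseteq\cat{C}$ spanned by the $2$-out-of-$3$ closure of $W$ (every edge of $\cat{C}_W^{/c}$ projects into $\ol{W}$), that the $W$-local covariant model structure over $\ol{\cat{W}}$ is just the sliced Kan--Quillen model structure by Lemma \ref{lem:characterizingWlocalfibrations}, and then writes down an explicit contracting homotopy onto $\mm{id}_c$ (the ``$\mm{max}$'' homotopy, mirroring the ``$\mm{min}$'' homotopy of Lemma \ref{lem:leftkanbycodomainfib}); no outer-horn analysis is needed. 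Your route instead stays over all of $\cat{C}$, crosses the tautological cylinder $\cat{C}_W^{/c}\times\Delta[1]$ in both directions, and powers the ``wrong'' direction by the statement that right outer horns $\Lambda^m[m]\rt\Delta[m]$ with terminal edge in $W$ are $W$-local trivial cofibrations --- which you correctly deduce from the Quillen equivalence \eqref{eq:equivalencebetweenrelativefunctors} (established earlier, so there is no circularity) together with the standard fact about special outer horns over $\cat{C}[W^{-1}]$, where the terminal edge has become invertible. Your proof is longer and leans on that standard but nontrivial piece of covariant-model-structure theory, but it buys a reusable general principle: the $W$-local right-anodyne statement is the natural complement to the left-horn/spine analysis in the proof of Lemma \ref{lem:characterizingWlocalfibrations}, and your argument never needs to introduce $\ol{W}$ or the observation that the slice lies over $\ol{\cat{W}}$. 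The paper's proof is shorter and more elementary once that observation is made. All the individual steps of your argument check out: $\mm{id}_c$ is indeed terminal in $\cat{C}_W^{/c}$ (since $W$ contains the equivalences and the subcategory is full), the shuffle filtration of the prism does isolate exactly one outer right horn per cell, whose terminal edge is the vertical edge over the last vertex, and $u_!$ does reflect weak equivalences since all objects are cofibrant on both sides.
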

\begin{proof}
The map $\{c\}\rt H(c)$ factors as
$$\xymatrix{
\{c\}\ar[r] & \cat{C}_W^{/c}\ar[r]^g & H(c)
}$$
where the second map $g$ is the covariant weak equivalence of Lemma \ref{lem:leftkanbycodomainfib}. It therefore suffices to show that the inclusion $\{c\}\rt \cat{C}_W^{/c}$ of the identity map on $c$ is a trivial cofibration in the $W$-local model structure on $\sSet/\cat{C}$.

To this end, let $\ol{\cat{W}}\subseteq \cat{C}$ be the the maximal sub-simplicial set of $\cat{C}$ whose arrows are given by the closure $\ol{W}$ of $W$ under the 2-out-of-3 property. Then $\ol{\cat{W}}$ is itself a quasicategory and the inclusion $\ol{\cat{W}}_W^{/c}\subseteq \cat{C}_W^{/c}$ is an isomorphism: indeed, if $\alpha\colon \Delta[n+1]\rt \cat{C}$ has edges $\alpha(i)\rt \alpha(n+1)$ in $W$ for all $i$, then all edges $\alpha(i)\rt \alpha(j)$ are contained in $\ol{W}$ by the 2-out-of-3 property. Since the left Quillen functor $\sSet^\mm{cov}/(\ol{\cat{W}}, W)\rt \sSet^\mm{cov}/(\cat{C}, W)$ preserves all $W$-local equivalences, it will suffice to show that the inclusion $\{c\} \rt  \ol{\cat{W}}_W^{/c}$ is a $W$-local equivalence in $\sSet/\ol{\cat{W}}$.

But the $W$-local model structure on $\sSet/\ol{\cat{W}}$ agrees with the Kan-Quillen model structure by Lemma \ref{lem:characterizingWlocalfibrations}. The result now follows from the fact that there is a concrete homotopy
$$\xymatrix{
\Delta[1]\times \ol{\cat{W}}_W^{/c} \ar[r] & \ol{\cat{W}}_W^{/c}
}$$
from the identity map to the constant map whose value is the identity on $c$. Indeed, this is simply given by the map adjoint to
$$\xymatrix{
\Delta[1]\times\Delta[1]\times \mm{Fun}(\Delta[1], \ol{\cat{W}}) \ar[rrr]^-{\mm{max}\times \mm{Fun}(\Delta[1], \ol{\cat{W}})} & & & \Delta[1]\times \mm{Fun}(\Delta[1], \ol{\cat{W}})\ar[r]^-{\mm{ev}} & \ol{\cat{W}}.
}$$
restricted to the simplicial subset $\ol{\cat{W}}_W^{/c}\subseteq \mm{Fun}(\Delta[1], \ol{\cat{W}})$.
\end{proof}
Essentially the only obstruction to $\pi\colon H(c)\rt \cat{C}$ being a fibrant replacement of $\{c\}\rt \cat{C}$ in $\sSet^\mm{cov}/(\cat{C}, W)$ is the fact that it is not a left fibration. More precisely, we will now show that there is a covariant weak equivalence from $\pi$ to a $W$-local left fibration $|\pi|\colon |H(c)|\rt \cat{C}$ whose fibers are simply the group-completions of the fibers of $\pi$. 
\begin{lemma}\label{lem:completionofcocartesianfibrations}
Consider a map $f$ of cocartesian fibrations over a simplicial set $S$
$$\xymatrix{
X\ar[rr]^f\ar[rd]_p & & Y\ar[ld]^q\\
 & S. &
}$$
Then $f$ is a covariant weak equivalence if and only if for each $s\in S$, the map of fibers $X_s\rt Y_s$ is an equivalence of simplicial sets in the Kan-Quillen model structure. Consequently, for any map $\alpha\colon S'\rt S$, the functor $\alpha^*\colon \sSet/S\rt \sSet/S'$ preserves covariant weak equivalences between cocartesian fibrations.
\end{lemma}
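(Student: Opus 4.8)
The plan is to reduce the statement to the corresponding (standard) fact for left fibrations, where covariant equivalences are detected fiberwise, using the straightening/unstraightening equivalence as the bridge. Recall that the covariant straightening $\mm{St}_S$ is the left adjoint of a Quillen equivalence between $\sSet^\mm{cov}/S$ and the projective model structure on the category of functors $\mf{C}[S]\rt \sSet$ (\cite[Theorem 2.2.1.2]{lur09}). Since every object of $\sSet/S$ is cofibrant and $\mm{St}_S$ is the left adjoint of a Quillen equivalence, a map $f\colon X\rt Y$ over $S$ is a covariant weak equivalence if and only if $\mm{St}_S(f)$ is a projective weak equivalence, i.e.\ if and only if $\mm{St}_S(X)(s)\rt \mm{St}_S(Y)(s)$ is a weak homotopy equivalence for every vertex $s\in S$. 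Everything therefore comes down to identifying the value $\mm{St}_S(X)(s)$ with the fiber $X_s$, up to weak homotopy equivalence, whenever $p\colon X\rt S$ is a cocartesian fibration.

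This identification is the heart of the argument, and is where I expect the main difficulty to lie. Although the fibers $X_s$ of a cocartesian fibration are quasicategories rather than Kan complexes, the space-valued straightening only records their underlying homotopy types, so the natural comparison $X_s\rt \mm{St}_S(X)(s)$ should be a weak homotopy equivalence (equivalently, $\mm{St}_S(X)(s)$ is a model for the group completion of the quasicategory $X_s$). Equivalently, one may phrase the crux geometrically: unstraightening the objectwise Kan fibrant replacement of $\mm{St}_S(X)$ produces a left fibration $\tilde{X}\rt S$ together with a map $X\rt \tilde{X}$ over $S$ which is a covariant weak equivalence and which restricts on each fiber to a Kan fibrant replacement $X_s\rt \tilde{X}_s$. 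To verify that this comparison really is a fiberwise weak homotopy equivalence, I would pull back along the simplices of $S$ to reduce to the case $S=\Delta[n]$, where a cocartesian fibration is classified by a composable chain of functors between quasicategories and the straightening admits an explicit description; the base case $S=\Delta[0]$ is exactly the identification of the covariant model structure over a point with the Kan--Quillen model structure on $\sSet$.

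Granting this identification, the equivalence follows at once: combining the first paragraph with the fiberwise computation of $\mm{St}_S(X)(s)$, the map $f$ is a covariant weak equivalence if and only if each $X_s\rt Y_s$ is a weak homotopy equivalence, which is the asserted fiberwise criterion. Finally, the statement about base change is then immediate. For any $\alpha\colon S'\rt S$, the pullbacks $\alpha^*X\rt S'$ and $\alpha^*Y\rt S'$ are again cocartesian fibrations (\cite[Proposition 2.4.2.3]{lur09}) and satisfy $(\alpha^*X)_{s'}=X_{\alpha(s')}$ and $(\alpha^*Y)_{s'}=Y_{\alpha(s')}$, so the fiberwise criterion is preserved under $\alpha^*$. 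Hence $\alpha^*\colon \sSet/S\rt \sSet/S'$ carries covariant weak equivalences between cocartesian fibrations to covariant weak equivalences.
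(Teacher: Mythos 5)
Your framework is sound at the two ends: since $\mm{St}_S$ is the left adjoint of a Quillen equivalence and every object of $\sSet/S$ is cofibrant, $f$ is a covariant equivalence if and only if $\mm{St}_S(f)$ is a pointwise weak homotopy equivalence, and the base-change statement does follow formally once the fiberwise criterion is in hand. The difficulty is that the step you yourself flag as the heart of the argument --- that $\mm{St}_S(X)(s)$ is weakly homotopy equivalent to the fiber $X_s$ when $p$ is a cocartesian fibration --- is precisely the mathematical content of the lemma, and it is not a soft statement about the straightening ``only recording underlying homotopy types.'' For a non-fibrant object of $\sSet^{\mm{cov}}/S$ the value $\mm{St}_S(X)(s)$ is a \emph{homotopy} fiber, not the actual fiber: for instance, since $\{0\}\rt \Delta[1]$ is left anodyne, $\mm{St}_{\Delta[1]}(\{0\})(1)$ is weakly equivalent to the nonempty space $\mm{St}_{\Delta[1]}(\Delta[1])(1)$, while the actual fiber of $\{0\}$ over $1$ is empty. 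So the identification genuinely uses the cocartesian hypothesis and needs an argument.

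The reduction to $S=\Delta[n]$ that you propose for this step does not go through as stated. Unmarked straightening is compatible with pushforward along a map of simplicial sets (HTT 2.2.1.1) but not with pullback: the comparison $\mm{St}_{S'}(\alpha^*X)\rt \mm{St}_S(X)\circ\mf{C}[\alpha]$ is an equivalence for left fibrations but fails for general $X$ (the example above), and knowing it holds for cocartesian fibrations is essentially equivalent to the base-change statement that appears as the \emph{consequence} of this lemma. Likewise, if you instead pull back a covariant fibrant replacement $X\rt\tilde X$ along a simplex, you must know that $\alpha^*X\rt\alpha^*\tilde X$ is still a covariant equivalence over $\Delta[n]$ --- again the corollary you are trying to prove. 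The paper closes this gap with the \emph{marked} straightening: $p^\natural$ is fibrant in the cocartesian model structure over $S$, so $\mm{St}^+(p^\natural)$ is projectively fibrant with values categorically equivalent to the fibers $X_s$ (the straightening theorem for cocartesian fibrations, HTT Theorem 3.2.0.1), and the left fibration associated to $p$ is the unmarked unstraightening of a Kan fibrant replacement of the underlying unmarked diagram; the fiberwise criterion then drops out. To repair your proof, either import that marked statement to justify $\mm{St}_S(X)(s)\simeq X_s$, or cite HTT 4.1.2.17 directly, as the paper also does.
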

\begin{proof}
This is a special case of \cite[4.1.2.17]{lur09}. Alternatively, one can easily reduce to the case where $q\colon Y\rt S$ is a left fibration, so that $f$ preserves cocartesian edges. Now apply the (marked) straightening functor to the cocartesian fibrations $p$ and $q$ and let $\mm{St}(f)\colon \mm{St}(p^\natural)\rt \mm{St}(q^\natural)$ be the map of the underlying $\mf{C}[S]$-indexed diagrams of (unmarked) simplicial sets. The left fibrations associated to $p$ and $q$ can then be described as the (unmarked) unstraightening of Kan fibrant replacements of $\mm{St}(p^\natural)$ and $\mm{St}(q^\natural)$. In particular, these associated left fibrations are (fiberwise) equivalent if and only if $\mm{St}(f)$ is a pointwise Kan-Quillen equivalence of $\mf{C}[S]$-diagrams of simplicial sets. But the value of $\mm{St}(f)$ at a point $c\in \mf{C}[S]$ is categorically equivalent to the map of fibers $X_c\rt Y_c$.
\end{proof}
In particular, the natural map from $\pi\colon H(c)\rt \cat{C}$ to the associated left fibration $|\pi|\colon |H(c)|\rt \cat{C}$ realizes the fibers of $|\pi|$ as Kan fibrant replacements of the fibers of $\pi$. To see that the left fibration $|\pi|$ is also fibrant in the $W$-local model structure, we need the following two lemmas:
\begin{lemma}\label{lem:adjunctionyieldsequivalence}
Let $p\colon X\rt \Delta[1]$ be a cocartesian fibration and let $|p|\colon |X|\rt \Delta[1]$ be the associated left fibration. If $p$ is also a cartesian fibration, then $|p|$ is a Kan fibration.
\end{lemma}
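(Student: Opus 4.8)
The plan is to exploit the description of the associated left fibration $|p|$ as a fiberwise Kan-fibrant replacement of $p$ (as in Lemma \ref{lem:completionofcocartesianfibrations} and the remark following it) together with the characterization of Kan fibrations over $\Delta[1]$ recorded in Remark \ref{rem:kanfibrationover1}. Since $|p|\colon |X|\rt \Delta[1]$ is already a left fibration, Remark \ref{rem:kanfibrationover1} tells us that $|p|$ is a Kan fibration if and only if the canonically associated map $|X|_0\rt |X|_1$ between its fibers is a weak equivalence of Kan complexes. So the entire statement reduces to verifying that this map of fibers is a homotopy equivalence.

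\emph{Identifying the map of fibers.}
First I would note that $|X|_0$ and $|X|_1$ are, by construction, Kan fibrant replacements of the fibers $X_0$ and $X_1$ of $p$, and that the map $|X|_0\rt |X|_1$ induced by the left fibration $|p|$ is the Kan-fibrant-replacement of the cocartesian transport functor $\varphi_!\colon X_0\rt X_1$ associated to $p$ over the edge $\Delta[1]$. Thus it suffices to show that this cocartesian pushforward functor $\varphi_!\colon X_0\rt X_1$ becomes an equivalence after group completion, i.e.\ induces a weak homotopy equivalence on the underlying spaces (maximal Kan complexes / classifying spaces) of the fibers.

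\emph{Using the cartesian hypothesis.}
The key step is to use that $p$ is \emph{also} a cartesian fibration. When $p\colon X\rt \Delta[1]$ is both cocartesian and cartesian, the cocartesian pushforward $\varphi_!\colon X_0\rt X_1$ and the cartesian pullback $\varphi^*\colon X_1\rt X_0$ form an adjunction $\varphi_!\dashv \varphi^*$ of $\infty$-categories (this is precisely the statement that a bifibration over $\Delta[1]$ encodes an adjunction; see \cite[5.2.2]{lur09}). The main obstacle — and the crux of the argument — is to conclude from this adjunction that $\varphi_!$ induces an equivalence on classifying spaces. This follows from the general fact that any adjunction of $\infty$-categories induces a homotopy equivalence between the classifying spaces of the two $\infty$-categories: the unit and counit of the adjunction provide natural transformations $\mm{id}\Rightarrow \varphi^*\varphi_!$ and $\varphi_!\varphi^*\Rightarrow \mm{id}$, and upon passing to classifying spaces natural transformations become homotopies (every $1$-simplex in a functor category yields a homotopy between the induced maps on geometric realizations). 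Hence $B\varphi_!$ and $B\varphi^*$ are mutually inverse homotopy equivalences.

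\emph{Conclusion.}
Putting this together: group completion (Kan fibrant replacement) of $X_0$ and $X_1$ computes their classifying spaces, the induced map $|X|_0\rt |X|_1$ realizes $B\varphi_!$, and $B\varphi_!$ is a homotopy equivalence because $\varphi_!$ admits the right adjoint $\varphi^*$. By Remark \ref{rem:kanfibrationover1}, a left fibration over $\Delta[1]$ whose associated map of fibers is an equivalence is a Kan fibration, so $|p|$ is a Kan fibration as claimed. The only care needed is in cleanly identifying the left-fibration-transport map $|X|_0\rt |X|_1$ with $B\varphi_!$, which I would handle via the straightening description of $|p|$ already invoked in the proof of Lemma \ref{lem:completionofcocartesianfibrations}.
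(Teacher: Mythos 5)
Your proposal is correct and follows essentially the same route as the paper: reduce via Remark \ref{rem:kanfibrationover1} to showing the transport map of fibers becomes an equivalence after Kan fibrant replacement, identify that map with the cocartesian pushforward $X_0\rt X_1$, and use \cite[5.2.2.8]{lur09} to produce a right adjoint whose unit and counit become homotopies upon group completion. The only cosmetic difference is that the paper pins down the fiber-transport identification with an explicit section $X_0\times\Delta[1]\rt X$ over $\Delta[1]$ rather than via straightening, but the substance is identical.
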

\begin{proof}
Since $p$ is a cocartesian fibration there exists a map
$$\xymatrix@R=1.4pc{
X_0\times \Delta[1]\ar[rd]_{\pi_2}\ar[rr]^f & & X\ar[ld]^p\\
& \Delta[1] &
}$$
such that $f\big|X_0\times \{0\} = \mm{id}$ and $f\big|\{x\}\times\Delta[1]$ is a $p$-cocartesian edge \cite[Proposition 5.2.1.4]{lur09}. Since the left fibration $|\pi_2|$ associated to the cocartesian fibration $\pi_2$ is clearly a Kan fibration, it suffices to show that the map $f\colon X_0\times\{1\}\rt X_1$ is a homotopy equivalence (see Remark \ref{rem:kanfibrationover1}). This follows from \cite[5.2.2.8]{lur09}, which asserts that there exists a functor $g\colon X_1\rt X_0$ (since $p$ is a cartesian fibration) and natural tranformations $\mm{id}_{X_0}\rt gf$ and $fg\rt \mm{id}_{X_1}$.
\end{proof}
\begin{lemma}\label{lem:pullingbackspans}
Let $\cat{C}$ be a quasicategory and $c\in \cat{C}$. Denote by $p\colon \mm{Span}(c)\rt \cat{C}$ the cocartesian fibration 
$$\xymatrix{
\mm{Fun}(\Lambda^0[2], \cat{C})\times_{\mm{Fun}(\{1\}, \cat{C})} \{c\} \ar[r] & \mm{Fun}(\{2\}, \cat{C}) = \cat{C}
}$$
sending $c\lt e\rt d$ to $d$. Then the following results hold:
\begin{enumerate}
\item For any span $x=\big[c\lt e\rt d\big]\in\mm{Span}(c)$, the obvious map
$$\xymatrix{
\mm{Span}(c)/x \ar[r] & \mm{Fun}(\Delta[1], \cat{C})/(e\rt d)
}$$
is a trivial fibration.

\item Let $\alpha\colon d'\rt d$ be an arrow in $\cat{C}$ with the property that all pullbacks along $\alpha$ exist (i.e. for any arrow $\beta\colon e\rt d$, the fiber product of $\alpha$ and $\beta$ exists in $\cat{C}$). Then every object $x\in p^{-1}(d)$ admits a $p$-cartesian lift $\tilde{\alpha}\colon x'\rt x$ of $\alpha$.
\end{enumerate}
\end{lemma}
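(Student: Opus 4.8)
The plan is to deduce both statements from the single structural observation that the functor sending a span to its right leg is a right fibration. Decomposing the horn as $\Lambda^0[2]=\Delta^{\{0,1\}}\cup_{\{0\}}\Delta^{\{0,2\}}$ identifies
$$\mm{Span}(c)=\cat{C}^{/c}\times_{\mm{ev}_0,\cat{C},\mm{ev}_0}\cat{C}^{\Delta[1]},$$
where the first factor is the alternative slice recording the left leg $e\rt c$ and the second factor records the right leg $e\rt d$; under this identification the restriction-to-the-right-leg functor $r\colon \mm{Span}(c)\rt \cat{C}^{\Delta[1]}$ is exactly the base change of the slice projection $\mm{ev}_0\colon \cat{C}^{/c}\rt \cat{C}$ along $\mm{ev}_0\colon \cat{C}^{\Delta[1]}\rt \cat{C}$. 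Since the slice projection $\cat{C}^{/c}\rt \cat{C}$ is a right fibration (see \cite[Section 4.2.1]{lur09}) and right fibrations are stable under base change, $r$ is a right fibration. Note also that $p=\mm{ev}_1\circ r$, where $\mm{ev}_1\colon \cat{C}^{\Delta[1]}\rt \cat{C}$ evaluates at the target: this factorization is what links the two parts.

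For statement (1), observe that $r(x)$ is precisely the right leg $(e\rt d)$, so the map in question is the functor on over-categories $\mm{Span}(c)/x\rt \cat{C}^{\Delta[1]}/r(x)$ induced by $r$. For any right fibration the induced map on over-categories is a trivial Kan fibration, this being the opposite of the statement that a left fibration induces trivial fibrations on under-categories \cite[Section 2.1.2]{lur09}. Applying this to $r$ gives (1) immediately.

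For statement (2), I would first produce the candidate lift and then verify it is cartesian by a two-step argument. Because all pullbacks along $\alpha\colon d'\rt d$ exist, the pullback $e'=e\times_d d'$ of the right leg $\beta\colon e\rt d$ of $x$ exists, and the resulting pullback square $\sigma\colon \beta'\rt \beta$ (with $\beta'\colon e'\rt d'$) is a morphism in $\cat{C}^{\Delta[1]}$ lying over $\alpha$. A commuting square is $\mm{ev}_1$-cartesian exactly when it is a pullback square (a direct verification; compare \cite[Corollary 2.4.7.12]{lur09}), so $\sigma$ is $\mm{ev}_1$-cartesian. Since $r$ is a right fibration it lifts $\sigma$ to an edge $\tilde{\alpha}\colon x'\rt x$ of $\mm{Span}(c)$ with $r(\tilde{\alpha})=\sigma$; concretely $x'=[c\lt e'\rt d']$, with left leg the composite $e'\rt e\rt c$.

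It remains to check that $\tilde{\alpha}$ is $p$-cartesian, and this is where the right fibration structure does the real work. Every edge of a right fibration is cartesian, so $\tilde{\alpha}$ is $r$-cartesian, while $r(\tilde{\alpha})=\sigma$ is $\mm{ev}_1$-cartesian; by the composition law for cartesian edges \cite[Proposition 2.4.1.3]{lur09}, $\tilde{\alpha}$ is cartesian for $p=\mm{ev}_1\circ r$. The main point to get right is exactly this reduction to the two fibrations $r$ and $\mm{ev}_1$: once $r$ is known to be a right fibration and the $\mm{ev}_1$-cartesian edges are identified with the pullback squares, both statements become formal. The only genuinely delicate step is therefore the opening identification of $\mm{Span}(c)$ as a base change of the slice projection (and hence of $r$ as a right fibration), which is what repackages the fixed vertex $c$ together with the left-leg condition into the alternative slice $\cat{C}^{/c}$.
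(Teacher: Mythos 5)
Your proof is correct and follows essentially the same route as the paper: both parts rest on exhibiting the right-leg functor $r\colon \mm{Span}(c)\rt \Fun(\Delta[1],\cat{C})$ as a base change of the right fibration $\cat{C}^{/c}\rt\cat{C}$ and factoring $p=\mm{ev}_1\circ r$, with (1) following from the right-fibration property and (2) from the composition law for cartesian edges. The one step you leave as ``a direct verification''---that a pullback square is an $\mm{ev}_1$-cartesian edge of $\Fun(\Delta[1],\cat{C})$ (note that [HTT, Corollary 2.4.7.12] characterizes the \emph{cocartesian} edges of $\mm{ev}_1$, so it does not apply directly)---is precisely where the paper's proof does its real work, via an explicit lifting argument reducing to [HTT, Lemma 4.3.2.12].
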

\begin{proof}
For (1), observe that a lifting problem of the form
$$\xymatrix{
\dau\Delta[n]\ar[d]\ar[r] & \mm{Span}(c)/x\ar[d]\\
\Delta[n]\ar[r]\ar@{..>}[ru] & \mm{Fun}(\Delta[1], \cat{C})/(e\rt d)
}$$
corresponds to a lifting problem of the form
\begin{equation}\label{diag:spantoarrowisrightfib}\vcenter{\xymatrix{
\Lambda^n[n+1]\ar[d]\ar[r] & \mm{Span}(c)\ar[d]\ar[r] & \cat{C}^{/c}\ar[d]\\
\Delta[n+1]\ar[r]\ar@{..>}[ru] & \mm{Fun}(\Delta[1], \cat{C})\ar[r]_-{\mm{dom}} & \cat{C}. 
}}\end{equation}
Since $\cat{C}^{/c}\rt \cat{C}$ is a right fibration and the right square is a pullback, such a lift exists.

For (2), observe that $p$ is the composition of the right fibration $q\colon \mm{Span}(c)\rt \Fun(\Delta[1], \cat{C})$ appearing in \eqref{diag:spantoarrowisrightfib} and the map $\mm{ev}_1\colon \Fun(\Delta[1], \cat{C})\rt \cat{C}$. Given $\alpha\colon d'\rt d$ in $\cat{C}$ and $x=\big[c\lt e\rt d\big]$ in $\mm{Span}(c)$, let $\ol{\alpha}$ and $\tilde{\alpha}$ be the arrows in $\Fun(\Delta[1], \cat{C})$, resp.\ $\mm{Span}(c)$ classifying the diagrams
\begin{equation}\label{diag:cartesianlift}
\ol{\alpha}\colon\hspace{4pt}\vcenter{\xymatrix{
e'\ar[r]\ar[d]_{\alpha'} & d'\ar[d]^\alpha\\
e\ar[r]_\beta & d.
}}\qquad\qquad \tilde{\alpha}\colon\hspace{4pt}\vcenter{\xymatrix{
c\ar@{=}[d] & e'\ar[l]\ar[r]\ar[d]_{\alpha'} & d'\ar[d]^\alpha\\
c & e\ar[l]\ar[r]_\beta & d.
}} 
\end{equation}
where $\ol{\alpha}$ is a cartesian square (which exists by assumption) and $\tilde{\alpha}$ is obtained from $\ol{\alpha}$ by composing $\alpha'$ and $e\rt c$. Clearly $\tilde{\alpha}$ is a lift of $\alpha$ against $p$ with endpoint $x$.

The arrow $\tilde{\alpha}$ is a $q$-cartesian lift of $\ol{\alpha}$ since $q$ is a right fibration \cite[Proposition 2.4.2.4]{lur09}. It suffices to check that $\ol{\alpha}$ is a cartesian lift of $\alpha$ against $\mm{ev}_1$. To see this, we have to check that each diagram
$$\xymatrix{
\{n\}\times\Delta[1]\ar[d]\ar[rd]^{\tilde{\alpha}} \\
\Delta[n]\times\{1\}\cup \dau\Delta[n]\times\Delta[1]\ar[d]\ar[r] & \Fun(\Delta[1], \cat{C})\ar[d]^{\mm{ev}_1}\\
\Delta[n]\times\Delta[1]\ar[r]\ar@{..>}[ru] & \cat{C}
}$$
with $n\geq 1$ admits a diagonal lift \cite[Proposition 2.4.1.8]{lur09}. Unwinding the definitions, this lifting problem is equivalent to a lifting problem
$$\xymatrix{
\dau\Delta[n]\ar[d]\ar[r]^-f & \Fun(\Delta[1]\times \Delta[1], \cat{C})\ar[d]^\psi\\
\Delta[n]\ar[r]\ar@{..>}[ru] & \Fun(\Lambda^2[2], \cat{C})
}$$
where $f(n)\in \Fun(\Delta[1]\times \Delta[1], \cat{C})$ is given by $\tilde{\alpha}$, which we defined to be a right Kan extension of its restriction to $\Lambda^2[2]$. The desired lift now exists by by \cite[Lemma 4.3.2.12]{lur09}.
\end{proof}
Combining the previous results, we obtain the following description of the mapping spaces of $\cat{C}[W^{-1}]$:
\begin{corollary}\label{cor:mappingspaceoffractioncat}
Let $(\cat{C}, W)$ be an $\infty$-category with hypercovers and let $\pi\colon H(c)\rt \cat{C}$ be the cocartesian fibration from Definition \ref{def:spanfibration}. Then
\begin{enumerate}
\item the associated left fibration $|H(c)|\rt \cat{C}$ is a $W$-local left fibration (i.e. a Kan fibration when restricted to the arrows of $W$).
\item the map $H(c)\rt |H(c)|$ realizes the fiber of the latter over $d\in \cat{C}$ as the group completion of the $\infty$-category $\mm{Span}^W_{\cat{C}}(c, d)$ of Remark \ref{rem:relativespans}.
\end{enumerate}
In particular, $\Map_{\cat{C}[W^{-1}]}(c, d)$ is equivalent to the group-completion of $\mm{Span}^W_{\cat{C}}(c, d)$.
\end{corollary}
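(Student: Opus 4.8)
The plan is to verify the two numbered claims separately and then feed them into the machinery of Section \ref{sec:localization}. Claim (2) is essentially bookkeeping. The natural map $H(c)\rt |H(c)|$ is a covariant weak equivalence between cocartesian fibrations over $\cat{C}$, the target being a left fibration. By the fiberwise characterization in Lemma \ref{lem:completionofcocartesianfibrations}, it therefore restricts over each $d\in \cat{C}$ to a Kan--Quillen weak equivalence $\mm{Span}^W_{\cat{C}}(c,d)=\pi^{-1}(d)\rt |H(c)|_d$ whose target is a Kan complex (the fiber of a left fibration), using the identification of $\pi^{-1}(d)$ from Remark \ref{rem:relativespans}. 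A weak equivalence into a Kan complex exhibits its target as a Kan fibrant replacement, which is exactly the group completion in the sense of Remark \ref{rem:localizationaspushout}. So I would read off (2) directly.

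The substance is Claim (1), and this is where the hypercover hypothesis enters. By Lemma \ref{lem:characterizingWlocalfibrations} it suffices to show that for each $w\colon \Delta[1]\rt \cat{C}$ in $W$, the base change $w^*|H(c)|\rt \Delta[1]$ is a Kan fibration. First I would pull $\pi$ back along $w$: since $\pi$ is a cocartesian fibration, so is $w^*H(c)\rt \Delta[1]$. The key observation is that this pullback is moreover a \emph{cartesian} fibration. Writing $w\colon c_0\rt c_1$, an object over $1$ is a span $c\lt e\rt c_1$ with $e\rt c$ in $W$; because $w\in W$, the hypercover property guarantees that all pullbacks along $w$ exist, so Lemma \ref{lem:pullingbackspans}(2), applied with $\alpha=w$, produces a $\pi$-cartesian lift $x'\rt x$ of $w$ in $\mm{Span}(c)$. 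I would then check that this lift stays inside $H(c)$: by the description in Lemma \ref{lem:pullingbackspans} its left leg factors as $e'\rt e\rt c$, where $e'\rt e$ is the base change of $w$ and hence lies in $W$ by the hypercover property, so the composite $e'\rt c$ lies in $W$ since $W$ is closed under composition. As $H(c)\subseteq \mm{Span}(c)$ is a full sub-$\infty$-category, such a lift is automatically $\pi$-cartesian in $H(c)$. Hence $w^*H(c)\rt \Delta[1]$ is both cocartesian and cartesian.

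With this in hand, Lemma \ref{lem:adjunctionyieldsequivalence} shows that the associated left fibration $|w^*H(c)|\rt \Delta[1]$ is a Kan fibration, and it remains to transfer this to $w^*|H(c)|$. Pulling the covariant equivalence $H(c)\rt |H(c)|$ back along $w$, the consequence of Lemma \ref{lem:completionofcocartesianfibrations} yields a covariant equivalence $w^*H(c)\rt w^*|H(c)|$ of cocartesian fibrations over $\Delta[1]$; thus $w^*|H(c)|$ is a left fibration covariantly equivalent to $w^*H(c)$, as is $|w^*H(c)|$. Since a left fibration covariantly equivalent to a Kan fibration is itself a Kan fibration, $w^*|H(c)|\rt \Delta[1]$ is a Kan fibration, and Lemma \ref{lem:characterizingWlocalfibrations} gives (1).

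For the concluding statement I would assemble everything: by Lemma \ref{lem:Wlocalequivalence} and the fact that $H(c)\rt |H(c)|$ is a covariant, hence $W$-local, equivalence, the map $\{c\}\rt |H(c)|$ is a $W$-local equivalence, and by (1) its target is $W$-local fibrant. Thus $|H(c)|\rt \cat{C}$ is a fibrant replacement of $\{c\}\rt \cat{C}$ in $\sSet^\mm{cov}/(\cat{C}, W)$, so Corollary \ref{cor:mappingspacesfromfibrantreplacement} identifies its fiber $|H(c)|_d$ with $\Map_{\cat{C}[W^{-1}]}(c,d)$; combined with (2) this gives the asserted equivalence. I expect the cartesianness of $w^*H(c)$ — and specifically the verification that the cartesian lifts remain inside the sub-$\infty$-category $H(c)$ of spans with left leg in $W$ — to be the crux, since this is the only point at which stability of $W$ under base change is used.
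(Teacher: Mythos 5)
Your proof is correct and takes essentially the same route as the paper: part (1) is reduced via Lemma \ref{lem:adjunctionyieldsequivalence} to producing locally $\pi$-cartesian lifts of arrows in $W$, which are built from Lemma \ref{lem:pullingbackspans} and shown to lie in the full subcategory $H(c)$ using stability of $W$ under base change and composition, while part (2) and the final identification follow from Lemmas \ref{lem:completionofcocartesianfibrations}, \ref{lem:Wlocalequivalence} and Corollary \ref{cor:mappingspacesfromfibrantreplacement}. You merely spell out more explicitly than the paper the transfer from $|w^*H(c)|$ to $w^*|H(c)|$, which is a welcome clarification rather than a deviation.
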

\begin{proof}
By Lemma \ref{lem:adjunctionyieldsequivalence}, assertion (1) follows once we show that for any hypercover $w\colon d'\rt d$ in $\cat{C}$ and any span $x$ in $\mm{Span}^W_{\cat{C}}(c, d)$, there is a locally $\pi$-cartesian edge $\tilde{w}\colon x'\rt x$ in $H(c)$ lifting $w$ (i.e. a cartesian edge with respect to the restriction $w^*H(c)\rt \Delta[1]$ of $\pi$). To see this, note that there is a fully faithful inclusion $H(c)\rt \mm{Span}(c)$ of cocartesian fibrations over $\cat{C}$. The cartesian lift $\tilde{w}\colon \Delta[1]\rt \mm{Span}(c)$ of $w$ provided by Lemma \ref{lem:pullingbackspans} (classifying the right diagram in \eqref{diag:cartesianlift}) is in fact an arrow in $H(c)$, since hypercovers are stable under base change and composition. Since $\tilde{w}$ is a cartesian edge in $\mm{Span}(c)$, it remains so in the full subcategory $H(c)$. Part (2) follows from part (1) and Lemma \ref{lem:completionofcocartesianfibrations}.
\end{proof}
\begin{corollary}\label{cor:fullyfaithfulonhypercoverlocalization}
Let $(\cat{C}, W)$ be an $\infty$-category with hypercovers and let $f\colon \cat{C}\rt \cat{D}$ be a functor sending the maps in $W$ to equivalences in $\cat{D}$. For each $c, d\in \cat{C}$, consider the induced functor
$$\xymatrix{
\cat{Span}^W_{\cat{C}}(c, d) \ar[r]^-f & \cat{Span}_{\cat{D}}^\mm{eq}(f(c), f(d))
}$$
from the $\infty$-category of spans $c\lt \tilde{c}\rt d$ with the left map in $W$ to the $\infty$-category of spans $f(c)\lt x\rt d$ with the left map an equivalence in $\cat{D}$. This map of quasicategories is a Kan-Quillen equivalence of simplicial sets for all $c$ and $d$ if and only if the functor $\cat{C}[W^{-1}]\rt \cat{D}$ is fully faithful.
\end{corollary}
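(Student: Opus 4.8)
The plan is to promote the span functor to a map of fibrations over $\cat{C}$ and to recognize it, after group completion, as the map induced by the localization on mapping spaces. First observe that $(\cat{D}, \mm{eq})$ is itself an $\infty$-category with hypercovers, since the pullback of an equivalence along an arbitrary map exists and is again an equivalence; hence Corollary \ref{cor:mappingspaceoffractioncat} applies to it, and as $\cat{D}[\mm{eq}^{-1}]\simeq\cat{D}$ we may compute $\Map_{\cat{D}}(f(c), f(d))$ as the group completion of $\mm{Span}^{\mm{eq}}_{\cat{D}}(f(c), f(d))$. In fact this span category is already a Kan complex: a morphism of such spans restricts to a map between two objects that are equivalent over $f(c)$ via their invertible left legs, hence is itself an equivalence by the $2$-out-of-$3$ property, so no information is lost upon group completion on the $\cat{D}$-side.

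Postcomposing spans with $f$, and using that $f$ carries $W$ into the equivalences of $\cat{D}$, yields a map of cocartesian fibrations $\Phi\colon H_{\cat{C}}(c)\rt f^*H_{\cat{D}}(f(c))$ over $\cat{C}$, where $H_{\cat{C}}(c)$ and $H_{\cat{D}}(f(c))$ are the fibrations of Definition \ref{def:spanfibration} for $(\cat{C}, W)$ and $(\cat{D}, \mm{eq})$ respectively. On the fiber over $d$ this restricts to the span functor $\phi_d\colon\mm{Span}^W_{\cat{C}}(c, d)\rt\mm{Span}^{\mm{eq}}_{\cat{D}}(f(c), f(d))$ of the statement. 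Passing to associated left fibrations produces $|\Phi|\colon|H_{\cat{C}}(c)|\rt f^*|H_{\cat{D}}(f(c))|$, a map between $W$-local left fibrations over $\cat{C}$: the source is $W$-locally fibrant by Corollary \ref{cor:mappingspaceoffractioncat}(1), and the target is a left fibration pulled back along $f$, so over each $w\in W$ it restricts to the base change of a left fibration along the equivalence $f(w)$, which is a Kan fibration; thus it is $W$-local by Lemma \ref{lem:characterizingWlocalfibrations}. By Lemma \ref{lem:completionofcocartesianfibrations} the fiber of $|\Phi|$ over $d$ is precisely the group completion of $\phi_d$.

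The crux is to identify $|\Phi|$, fiberwise, with the map induced by the localization. Under the Quillen equivalence \eqref{eq:equivalencebetweenrelativefunctors}, the $W$-local left fibration $|H_{\cat{C}}(c)|$ corresponds to the corepresentable functor $\Map_{\cat{C}[W^{-1}]}(c, -)$ on $\cat{C}[W^{-1}]$ (this is the content of Corollary \ref{cor:mappingspacesfromfibrantreplacement}), while $f^*|H_{\cat{D}}(f(c))|$ corresponds to $\Map_{\cat{D}}(g(c), g(-))$, where $g\colon\cat{C}[W^{-1}]\rt\cat{D}$ is the functor induced by $f$; here one uses $f=g\circ u$, so that $d\mapsto\Map_{\cat{D}}(f(c), f(d))$ is the restriction along $u$ of a functor on $\cat{C}[W^{-1}]$. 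Thus $|\Phi|$ classifies a natural transformation $\Map_{\cat{C}[W^{-1}]}(c, -)\rt\Map_{\cat{D}}(g(c), g(-))$, which by the Yoneda lemma is determined up to homotopy by its value on $\mm{id}_c$. Since $\Phi$ sends the constant span $c\lt c\rt c$ to the constant span $g(c)\lt g(c)\rt g(c)$, this value is $\mm{id}_{g(c)}$, which is exactly the value on $\mm{id}_c$ of the transformation induced by $g$; hence the two transformations agree. Consequently the fiber of $|\Phi|$ over $d$ is identified with $g_*\colon\Map_{\cat{C}[W^{-1}]}(c, d)\rt\Map_{\cat{D}}(g(c), g(d))$.

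It remains to assemble the equivalences. The functor $g$ is fully faithful if and only if $g_*$ is an equivalence for all $c, d$, that is, by the previous paragraph, if and only if the group completion of $\phi_d$ is an equivalence for all $c, d$. Since group completion is a Kan fibrant replacement, a map of simplicial sets is a Kan-Quillen equivalence exactly when its group completion is an equivalence; therefore this holds for all $c, d$ if and only if each $\phi_d$ is a Kan-Quillen equivalence, which is the assertion. The main obstacle is the naturality identification of the third paragraph: matching the geometrically defined $|\Phi|$, built from postcomposition of spans, with the functor $g_*$ on mapping spaces. This is where the Yoneda argument, carried out through the Quillen equivalence \eqref{eq:equivalencebetweenrelativefunctors}, does the essential work; everything else is bookkeeping with the fibration models supplied by the previous lemmas.
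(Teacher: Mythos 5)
Your argument is correct, and its skeleton (the map of cocartesian fibrations $H(c)\rt f^*H^{\mm{eq}}(f(c))$ over $\cat{C}$, followed by a fiberwise reduction via Lemma \ref{lem:completionofcocartesianfibrations}) coincides with the paper's. Where you diverge is in how the comparison map is identified with the map induced by $g\colon \cat{C}[W^{-1}]\rt\cat{D}$ on mapping spaces. The paper never group-completes and never invokes Yoneda at this point: it places $H(c)$ and $H^{\mm{eq}}(f(c))$ in a zigzag with the slice fibrations $c/\cat{C}$ and $f(c)/\cat{D}$, appeals to Corollary \ref{cor:fullyfaithfuloutoflocalization} (which already encodes ``$g$ fully faithful $\iff$ $c/\cat{C}\rt f^*(f(c)/\cat{D})$ is a $W$-local equivalence''), and then uses that $c/\cat{C}\rt H(c)$ is a $W$-local equivalence (both receive $W$-local equivalences from $\{c\}$, by Lemma \ref{lem:Wlocalequivalence}) to transfer the criterion to the span fibrations. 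You instead pass to associated left fibrations, check $W$-locality of both sides, and pin down $|\Phi|$ by evaluating the corresponding natural transformation at $\mm{id}_c$; this is a legitimate alternative, but it essentially re-proves the content of Corollary \ref{cor:fullyfaithfuloutoflocalization} (whose own proof runs the same derived-unit/corepresentability analysis) rather than citing it, so it costs more words for the same theorem. Two small credits to your version: the observation that $\mm{Span}^{\mm{eq}}_{\cat{D}}(f(c),f(d))$ is already a Kan complex is not needed here but is true and is used by the paper later (in the lemma preceding Theorem \ref{thm:bicatofspans}), and your explicit verification that $f^*|H_{\cat{D}}(f(c))|$ is $W$-local is a correct substitute for the paper's remark that both fibrations restrict to cartesian fibrations over arrows of $W$.
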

\begin{proof}
For any object $c\in \cat{C}$, consider the diagram
$$\xymatrix{
c/\cat{C}\ar[r] \ar[d] & H(c)\ar[d]\ar[r]  & \cat{C}\ar[d]^f\\
f(c)/\cat{D} \ar[r] & H^\mm{eq}(f(c))\ar[r] & \cat{D}
}$$
where $H^\mm{eq}(f(c))$ arises as in Definition \ref{def:spanfibration} from the relative $\infty$-category $(\cat{D}, \mm{eq})$. In light of  Corollary \ref{cor:fullyfaithfuloutoflocalization}, the functor $\cat{C}[W^{-1}]\rt \cat{D}$ is fully faithful if and only if for any $c\in \cat{C}$, the left vertical map induces a $W$-local weak equivalence $c/\cat{C}\rt f^*(f(c)/\cat{D})$ of left fibrations over $\cat{C}$. Since the map $f(c)/\cat{D}\rt H^\mm{eq}(f(c))$ is a covariant weak equivalence between left fibrations over $\cat{D}$, this is equivalent to the composition
$$\xymatrix{
c/\cat{C}\ar[r] & H(c) \ar[r] & f^*H^\mm{eq}(f(c))
}$$
being a $W$-local weak equivalence. Since the first map is a $W$-local weak equivalence, this is in turn equivalent that the second map is a $W$-local weak equivalence. But $H(c)\rt \cat{C}$ and $f^*H^\mm{eq}(f(c))\rt \cat{C}$ are cocartesian fibrations whose restriction to each arrow in $W$ is cartesian. The second map is therefore a $W$-local weak equivalence if and only if the induced maps on fibers are Kan-Quillen equivalences, which is precisely the assertion of the lemma.
\end{proof}
Although the above gives a rather concrete description of the mapping spaces in $\cat{C}[W^{-1}]$, it is not completely satisfactory as it lacks a description of the composition in $\cat{C}[W^{-1}]$. We will give a more detailed description of $\cat{C}[W^{-1}]$ in terms of spans in the next section.

\section{Bicategories of fractions}\label{sec:2catoffractions}
The aim of this section is to extend the description of the mapping spaces of $\cat{C}[W^{-1}]$ in terms of spans to a description of the entire category $\cat{C}[W^{-1}]$ (including the composition of maps). Since pullbacks along hypercovers exist and remain hypercovers, there is an obvious notion of composition of spans, given by forming the pullback
$$\xymatrix@rd@C=1pc@R=1pc{
\tilde{c}\times_d \tilde{d} \ar[r] \ar[d]_-{\scriptscriptstyle\sim} & \tilde{d} \ar[d]^-{\scriptscriptstyle\sim}\ar[r] & e\\
\tilde{c}\ar[r]\ar[d]_-{\scriptscriptstyle\sim} & d\\
c
}$$
and taking the resulting total span. Unfortunately, this composition is only determined up to equivalence and is a priori determined at the level of \emph{categories} of spans, rather than their group completions. To address these issues, it will be useful to work in a setting of $(\infty, 2)$-categories which allows for a description of an $(\infty, 2)$-category $\mm{Span}(\cat{C}, W)$ whose mapping categories are the above categories of spans, so that the associated freely generated $\infty$-category $|\mm{Span}(\cat{C}, W)|$ (obtained by group-completing all mapping categories) is a model for the localization $\cat{C}[W^{-1}]$. We will first recall the piece of $(\infty, 2)$-category theory we need and provide a model for $\mm{Span}(\cat{C}, W)$ in Section \ref{sec:2catofspans} (see Construction \ref{con:spancategory} and Theorem \ref{thm:bicatofspans}).

\subsection{Some 2-category theory}\label{sec:basicson2cats}
To stay relatively close to quasicategories, we will use the following model for $(\infty, 2)$-categories:
\begin{definition}\label{def:segalobject}
Let $X\colon \mb{\Delta}^\mm{op}\rt \sSet$ be a bisimplicial set. We will say that $X$ is a \emph{Segal object} if 
\begin{enumerate}
\item $X$ is Reedy fibrant, where $\sSet$ carries the \emph{Joyal model structure}.
\item $X_0$ is a Kan complex.
\item the Segal maps $X_n\rt X_1\times_{X_0} \cdots \times_{X_0} X_n$ are equivalences of quasicategories.
\end{enumerate}
If $X\colon \mb{\Delta}^\mm{op}\rt \sSet$ is a Segal object, let $K(X)\colon \mb{\Delta}^\mm{op}\rt \sSet$ be the simplicial object whose $n$-th object is the maximal Kan complex $K(X_n)$ contained in $X_n$. Since $K$ preserves fibrations, it is easy to see that $K(X)$ is a Segal space in the usual sense of \cite{rez98b}. We will say that the Segal object $X$ is \emph{complete} if $K(X)$ is a complete Segal space in the usual sense.
\end{definition}
\begin{lemma}
The category $\mm{Fun}(\mb{\Delta}^\mm{op}, \sSet)$ carries a model structure (which we call the \emph{2-categorical model structure}) whose cofibrations are the monomorphisms and whose fibrant objects are the complete Segal objects. Furthermore, this model category is Quillen equivalent to the model category for 2-fold complete Segal spaces.
\end{lemma}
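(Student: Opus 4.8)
The plan is to establish the claimed model structure on $\mm{Fun}(\mb{\Delta}^\mm{op}, \sSet)$ by bootstrapping from the Joyal model structure on $\sSet$ via a localization of the Reedy model structure, and then to identify the fibrant objects and compare with the known model for $2$-fold complete Segal spaces. First I would equip $\mm{Fun}(\mb{\Delta}^\mm{op}, \sSet)$ with the injective (equivalently, Reedy) model structure induced by the Joyal model structure on $\sSet$; since $\mb{\Delta}^\mm{op}$ is a Reedy category and the Joyal structure is combinatorial, this Reedy structure exists and has monomorphisms as its cofibrations (the Reedy cofibrations agree with monomorphisms because every object of $\sSet$ is Joyal-cofibrant). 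The fibrant objects here are precisely the Reedy fibrant bisimplicial sets, i.e. condition (1) of Definition \ref{def:segalobject}.

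Next I would perform a left Bousfield localization of this Reedy model structure at two classes of maps: the spine inclusions $\mm{Sp}[n]\rt \Delta[n]$ (tensored into the simplicial direction appropriately) to force the Segal maps of condition (3) to be equivalences of quasicategories, and the map that witnesses $X_0$ being a Kan complex together with the completeness map (the inclusion $\Delta[0]\rt E$ of the terminal object into the interval groupoid, imposed in the external direction) to force conditions (2) and completeness. Because the Reedy model structure over a combinatorial model category is again combinatorial and left proper, the localization exists by the standard Bousfield localization machinery. The fibrant objects of the localized structure are then exactly the Reedy fibrant $X$ that are local with respect to these maps; unwinding the locality conditions shows these are precisely the complete Segal objects, establishing the first claim. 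The Segal condition must be imposed as an equivalence of \emph{quasicategories} rather than of Kan complexes, which is why the localization is taken relative to the Joyal structure in each simplicial degree; the passage to $K(X)$ then automatically yields a Segal space, and the completeness condition on $K(X)$ is exactly the condition imposed.

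For the Quillen equivalence with the model for $2$-fold complete Segal spaces, I would compare the two localizations of Reedy model structures over $\sSet$. The model for $2$-fold complete Segal spaces is obtained by iterating the complete Segal space construction: one takes $\mm{Fun}(\mb{\Delta}^\mm{op}, \mm{Fun}(\mb{\Delta}^\mm{op}, \sSet))$ with the appropriate localizations in both simplicial directions. The key observation is that a Segal object in the sense above, where each $X_n$ is a quasicategory, corresponds under the (Quillen equivalent) complete Segal space model for quasicategories to a bisimplicial-in-each-degree object satisfying the iterated Segal and completeness conditions. Concretely, I would apply the right Quillen equivalence $\sSet_{\mm{Joyal}}\simeq \mm{CSS}$ (between quasicategories and complete Segal spaces) degreewise in the external $\mb{\Delta}^\mm{op}$ direction, check that it is a right Quillen functor for the Reedy structures, and verify that it carries the localizing maps defining complete Segal objects to the localizing maps defining $2$-fold complete Segal spaces (and reflects local equivalences). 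Since a right Quillen equivalence between the ambient Reedy structures that matches up the classes of localizing maps descends to a Quillen equivalence between the corresponding left Bousfield localizations, this yields the desired comparison.

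The main obstacle I anticipate is the matching of localizing maps in the comparison step: one must verify carefully that the completeness condition imposed on the Segal object $X$ (completeness of the Segal space $K(X)$) corresponds precisely to the completeness condition in \emph{both} simplicial directions of the $2$-fold complete Segal space, and not merely one of them. The subtlety is that $X_0$ being a Kan complex (condition (2)) and the external completeness together must reproduce the two separate completeness conditions of the iterated construction; disentangling which direction each condition controls, and confirming that no condition is lost or doubled under the degreewise equivalence $\sSet_{\mm{Joyal}}\simeq \mm{CSS}$, is where the real bookkeeping lies. A secondary technical point is ensuring that taking maximal Kan complexes $K(-)$ interacts correctly with Reedy fibrancy, so that $K(X)$ is genuinely Reedy fibrant as a Segal space; this follows from $K$ preserving fibrations and limits, as already noted in Definition \ref{def:segalobject}, but it must be invoked at the right place in the comparison.
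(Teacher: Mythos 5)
Your strategy is essentially the one the paper uses: both arguments start from the injective/Reedy model structure on $\mm{Fun}(\mb{\Delta}^\mm{op}, \sSet_\mm{Joyal})$, both hinge on the degreewise application of the Joyal--Tierney Quillen equivalence $i_0 \dashv \mm{ev}_0$ between quasicategories and complete Segal spaces, and both obtain the 2-categorical structure as a Bousfield localization matched against the localization for 2-fold complete Segal spaces. The one organizational difference is that you build the localization from explicit generating maps (spine inclusions, a Kan-ness map, a completeness map) and then compare, whereas the paper simply \emph{transfers} the 2-fold-CSS localization across the degreewise Quillen equivalence, so that existence of the model structure and the Quillen equivalence come for free and only the identification of the fibrant objects remains. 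Your route costs you an extra verification that locality with respect to your chosen maps characterizes exactly the complete Segal objects (and note that the criterion for descending to localizations is usually phrased via the \emph{left} adjoint $(i_0)_*$ sending localizing maps to local equivalences, not via the right adjoint carrying localizing maps to localizing maps); in exchange it gives explicit generators for the localization, which the paper's version does not. The "real bookkeeping" you flag at the end is exactly where the paper's only nontrivial observation lives: for a Reedy fibrant $Y$ with each $Y_n$ a complete Segal space, the maximal Kan complex in $\mm{ev}_0(Y_n)=Y_{n,-,0}$ is equivalent to $Y_{n,0,-}$, which is what matches the single completeness condition on $K(X)$ (together with $X_0$ being a Kan complex) against the completeness conditions of the iterated construction.
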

\begin{proof}
Recall from \cite{joy06} that there is a Quillen equivalence 
$$
i_0\colon \sSet_\mm{Joy} \leftrightarrows \mm{Fun}(\mb{\Delta}^\mm{op}, \sSet)_\mm{CSS} \colon \mm{ev}_0
$$
between the Joyal model structure and the (injective) model structure for complete Segal spaces. This Quillen equivalence induces a Quillen equivalence 
$$\xymatrix{
(i_0)_*\colon \mm{Fun}(\mb{\Delta}^\mm{op}, \sSet_\mm{Joy}) \ar@<1ex>[r] &  \mm{Fun}(\mb{\Delta}^\mm{op}, \mm{Fun}(\mb{\Delta}^\mm{op}, \sSet)_\mm{CSS})\colon (\mm{ev}_0)_*\ar@<1ex>[l]
}$$
between the between the (injective, equivalently Reedy) model categories of simplicial diagrams in $\sSet$ with the Joyal model structure, resp.~ in the model structure of complete Segal spaces. 

The right hand model structure admits a Bousfield localization whose fibrant objects are the 2-fold complete Segal spaces and the left model structure admits a Quillen equivalent Bousfield localization. The fibrant objects in this Bousfield localization are those Reedy fibrant diagrams $X\colon \mb{\Delta}^\mm{op}\rt \sSet_\mm{Joyal}$ that are levelwise equivalent to a diagram of the form $(\mm{ev}_0)_*Y$, where $Y\colon \mb{\Delta}^\mm{op}\rt \mm{Fun}(\mb{\Delta}^\mm{op}, \sSet)$ is a 2-fold complete Segal space. The result now follows from the observation that a Reedy fibrant diagram $Y\colon \mb{\Delta}^\mm{op}\rt \mm{Fun}(\mb{\Delta}^\mm{op}, \sSet)_\mm{CSS}$ is a 2-fold complete Segal space if and only if $(\mm{ev}_0)_*Y$ is a complete Segal object in the sense of Definition \ref{def:segalobject}: the two Segal conditions are clearly equivalent and the two completeness conditions are equivalent because the maximal Kan complex contained in each $\mm{ev}_0(Y_n)=Y_{n, -, 0}$ is equivalent to the space $Y_{n, 0, 
-}$, by completeness of $Y_n$.
\end{proof}
\begin{remark}
It follows from \cite[Theorem 1.2.13]{lur09g} that a map $f\colon X\rt Y$ between Segal objects is a weak equivalence in the 2-categorical model structure if and only if it is a Segal equivalence, i.e.\ if $f$ is fully faithful and essentially surjective.
\end{remark}
The usual model structure for complete Segal spaces is a Bousfield localization of the 2-categorical model structure, whose fibrant objects are the complete Segal objects for which each $X_n$ is a Kan complex. Since $\mm{ev}_0\colon \Fun(\mb{\Delta}^\mm{op}, \sSet)\rt \sSet$ is a right Quillen equivalence between the Joyal model structure and the model structure for complete Segal spaces, this realizes the homotopy theory for $(\infty, 1)$-categories as a Bousfield localization of the homotopy theory for $(\infty, 2)$-categories. In particular, we can view each quasicategory $\cat{C}$ as an $(\infty, 2)$-category (which happens to be an $(\infty, 1)$-category) by means of its \emph{nerve} $\mc{N}(\cat{C})$ (levelwise equivalent to the object $t^!\cat{C}$ defined in \cite{joy06}, cf. \cite[Proposition 6.13]{cis10}). Recall that $\mc{N}(\cat{C})$ is the complete Segal space whose $n$-th space is given by the maximal Kan complex  $K(\mm{Fun}(\Delta[n], \cat{C}))$ contained in $\mm{Fun}(\Delta[n], \cat{C})$. This indeed gives a complete Segal space model for $\cat{C}$ since $\mm{ev}_0\mc{N}(\cat{C})$ is \emph{isomorphic} to $\cat{C}$. 

We think of the localization functor from Segal objects to Segal spaces as taking the free $(\infty, 1)$-category associated to an $(\infty, 2)$-category. The free Segal space associated to a Segal object $X$ can be described relatively easily, by group-completing all mapping categories of $X$:
\begin{lemma}\label{lem:free1caton2cat}
Let $X\colon \mb{\Delta}^\mm{op}\rt \sSet$ be a Segal object in the sense of Definition \ref{def:segalobject}. Then its levelwise Kan fibrant replacement, which we will denote by $|X|$, is (levelwise weakly equivalent to) a Segal space.
\end{lemma}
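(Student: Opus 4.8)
The plan is to verify that a Reedy fibrant replacement $W$ of $|X|$ in the (Kan--Quillen) Reedy model structure on $\mm{Fun}(\mb{\Delta}^\mm{op}, \sSet)$ is a Segal space in the sense of \cite{rez98b}; since such a replacement is a levelwise weak equivalence, this suffices. By construction $|X|$ is levelwise a Kan complex, and $|X|_0 = |X_0| = X_0$ is already a Kan complex by condition (2) for the Segal object $X$, so $W$ is levelwise Kan. It therefore remains only to check the Segal condition, i.e.\ that the Segal maps of $|X|$ are weak homotopy equivalences onto the homotopy pullbacks $|X|_1\times^h_{|X|_0}\cdots\times^h_{|X|_0}|X|_1$.

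The key input is that the Kan fibrant replacement functor $|{-}|$, being the left derived functor of the identity $\sSet_\mm{Joy} \rt \sSet_\mm{Kan}$ (all objects are cofibrant, and every categorical equivalence is a weak homotopy equivalence), sends categorical equivalences to weak homotopy equivalences. Since $X$ is Reedy fibrant the matching map $X_1 \rt X_0\times X_0$ is a Joyal fibration, so both face maps $X_1 \rt X_0$ are Joyal fibrations; as all objects in sight are quasicategories, the strict iterated pullback $X_1\times_{X_0}\cdots\times_{X_0} X_1$ then computes the homotopy pullback in the Joyal model structure. Condition (3) for $X$ says the Segal map $X_n \rt X_1\times_{X_0}\cdots\times_{X_0} X_1$ is a categorical equivalence, so applying $|{-}|$ gives a weak homotopy equivalence
\begin{equation*}
|X|_n = |X_n| \;\xrightarrow{\ \sim\ }\; \big|X_1\times_{X_0}\cdots\times_{X_0} X_1\big|.
\end{equation*}

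It remains to identify the right-hand side with $|X|_1\times^h_{X_0}\cdots\times^h_{X_0}|X|_1$; this is the heart of the matter, amounting to the assertion $(\star)$ that $|{-}|$ commutes with homotopy pullbacks along maps into the Kan complex $X_0$. By induction on $n$ it suffices to treat a single pullback $X_1\times_{X_0} X_1$. I would prove $(\star)$ by straightening: after factoring each leg $X_1\rt X_0$ as a categorical equivalence followed by a cocartesian fibration (which leaves $|{-}|$ unchanged), the straightening equivalence of \cite{lur09} presents each leg as a diagram $X_0 \rt \mm{Cat}_\infty$. Localizing fiberwise and using that the classifying space of a cocartesian fibration is the homotopy colimit of the classifying spaces of its fibers, both sides of $(\star)$ become the homotopy pullback of the localized diagrams of spaces. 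Because $X_0$ is an $\infty$-groupoid and colimits in $\sS$ are universal (stable under base change), this homotopy colimit commutes with the homotopy pullback, giving $(\star)$.

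The main obstacle is exactly $(\star)$: localization does not commute with homotopy pullbacks in general, and the argument genuinely uses that the base $X_0$ is a Kan complex, so that the base change is fiberwise over a space and colimit-universality applies. An alternative route to $(\star)$ that stays closer to the covariant machinery of Section~\ref{sec:localization} is to replace each leg by a left fibration over $X_0$ — automatically a Kan fibration, since $X_0$ is a Kan complex (cf.\ Lemma~\ref{lem:characterizingWlocalfibrations}) — and to check that this replacement is a weak homotopy equivalence on total spaces, so that the homotopy pullback is modeled by a strict pullback of Kan fibrations whose realization is computed levelwise.
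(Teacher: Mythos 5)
Your overall strategy matches the paper's: reduce everything to the single claim $(\star)$ that over the Kan complex $X_0$ group completion commutes with homotopy pullback, after observing that Reedy fibrancy makes the face maps $X_1\rt X_0$ Joyal fibrations (so the strict iterated pullback is the Joyal homotopy pullback) and that $|{-}|$ sends categorical equivalences to weak homotopy equivalences. Where you genuinely diverge is in the proof of $(\star)$. The paper disposes of it with an elementary point-set argument: since $X_0$ is Kan, the comparison map $\Fun(N(J),X_0)\rt \Fun(\Delta[1],X_0)$ is a trivial fibration, so the Joyal and Kan--Quillen homotopy pullbacks over $X_0$ are computed by one and the same construction, namely a strict pullback along the Kan fibration $\Fun(\Delta[1],X_0)\rt X_0\times X_0$; right properness of the Kan--Quillen model structure then makes that construction invariant under the weak homotopy equivalences $X_1\rt |X_1|$, which is exactly $(\star)$. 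Your route via straightening, fiberwise localization, the identification of the classifying space of a cocartesian fibration with the homotopy colimit of the classifying spaces of its fibers, and universality of colimits in $\sS$ is also correct, and it exhibits the conceptual reason $(\star)$ holds (namely that $\sS$ is an $\infty$-topos, so base change over a space commutes with the colimits computing group completion); but it imports substantially heavier machinery, and the two inputs you quote --- $|{-}|$ of a cocartesian fibration as a homotopy colimit over the base, and compatibility of fiberwise localization with fiber products of cocartesian fibrations --- are themselves theorems requiring at least as much care as the statement being proved. Your closing alternative (replace each leg by its associated left fibration, automatically a Kan fibration over the Kan base, and check the replacement is a weak homotopy equivalence on total spaces) is essentially a repackaging of the paper's argument using Lemma \ref{lem:completionofcocartesianfibrations}, and would also work.
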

\begin{proof}
Since $X_0$ is already a Kan complex, we may assume that the map $X\rt |X|$ is the identity in degree $0$. Now observe that for any diagram $\cat{C}\rt \cat{E}\lt \cat{D}$ of quasicategories for which $\cat{E}$ is a Kan complex, the homotopy pullback in the Joyal model structure is equivalent to the homotopy pullback in the Kan-Quillen model structure: indeed, since the Kan-Quillen model structure is right proper, the natural comparison map between these two homotopy limits is provided by the map
$$\xymatrix{
\cat{C}\times\cat{D}\times_{\cat{E}\times\cat{E}} \Fun(N(J), \cat{E})\ar[r] & \cat{C}\times\cat{D}\times_{\cat{E}\times\cat{E}} \Fun(\Delta[1], \cat{E})
}$$
which is a trivial fibration since $\Fun(N(J), \cat{E})\rt \Fun(\Delta[1], \cat{E})$ is a trivial fibration for any Kan complex $\cat{E}$. The $n$-th Segal map of $|X|$ can then be identified with the composition
$$\xymatrix@C=0.8pc{
|X_n|\ar[r] & \big|X_1\times_{X_0}^{\scriptscriptstyle\mm{hJ}} \cdots \times_{X_0}^{\scriptscriptstyle\mm{hJ}} X_1\big|\ar[r] & \big|X_1\times_{X_0}^{\scriptscriptstyle\mm{hKQ}} \cdots \times_{X_0}^{\scriptscriptstyle\mm{hKQ}} X_1\big| \ar[r] & |X_1|\times^{\scriptscriptstyle\mm{hKQ}}_{X_0} \cdots \times_{X_0}^{\scriptscriptstyle\mm{hKQ}} |X_1|
}$$
where the second and third term are the Kan fibrant replacements of the relevant homotopy pullbacks in the Joyal, resp.\ the Kan-Quillen model structure. 
The first map is an equivalence by the original Segal condition on $X$, the second map is an equivalence by the above discussion and the last map is an equivalence since homotopy limits are invariant under weak equivalences.
\end{proof}

\subsection{The 2-category of spans}\label{sec:2catofspans}
We will now define our model for the $(\infty, 2)$-category $\mm{Span}(\cat{C}, W)$ whose mapping categories are the categories $\mm{Span}_{\cat{C}}^W(c, d)$ of Remark \ref{rem:relativespans}. This follows the construction in \cite{hau14} where $W=\cat{C}$.
\begin{construction}\label{con:spancategory}
For each $[n]\in \mb{\Delta}$, let $\mb{\Sigma}_n$ be the poset whose objects are pairs $(i, j)$ with $0\leq i\leq j\leq n$, where $(i, j)\leq (i', j')$ if $i\leq i'$ and $j\geq j'$. We will consider $\mb{\Sigma}_n$ as a relative category by declaring all maps $(i, j)\rt (i, j')$ to be weak equivalences. A map $\alpha\colon [m]\rt [n]$ induces a relative functor $\mb{\Sigma}_m\rt \mb{\Sigma}_n$ sending $(i, j)$ to $(\alpha(i), \alpha(j))$.

This yields a functor $N(\mb{\Sigma})\colon \mb{\Delta}\rt \cat{RelCat}\rt \sSet^+$ where the last functor is the marked nerve functor. 
For each $n$, let $\mm{Span}(\cat{C}, W)_n$ be the maximal sub-$\infty$-category of $\mm{Fun}(N(\mb{\Sigma}_n), \cat{C})$ whose
\begin{enumerate}
 \item[(a)] objects are relative functors $F\colon N(\mb{\Sigma}_n)\rt (\cat{C}, W)$ with the property that for any two tuples $(i, j)$ and $(i', j')$ with $i<i'\leq j'<j$, the corresponding square
 \begin{equation}\label{diag:pullbackspan}\vcenter{\xymatrix@rd@R=1.5pc@C=1.5pc{
 F(i, j) \ar[r]\ar[d]_-{\scriptscriptstyle\sim} & F(i', j)\ar[d]^-{\scriptscriptstyle\sim} \\
 F(i, j')\ar[r] & F(i', j')
 }}\end{equation}
 is cartesian in $\cat{C}$ (where the maps marked by $\sim$ are in $W$).
 \item[(b)] arrows are natural transformations $F\rt G$ of such relative functors, such that $F(i, i)\rt G(i, i)$ is an equivalence for all $0\leq i\leq n$.
\end{enumerate}
This results in a functor $\mm{Span}(\cat{C}, W)\colon \mb{\Delta}^\mm{op}\rt \sSet$ taking values in quasicategories.
\end{construction}
For example, an object in $\mm{Span}(\cat{C}, W)_1$ is a span
$$\xymatrix@R=1pc@C=1pc{
 & F(0, 1)\ar[ld]_{\scriptscriptstyle\sim}\ar[rd]\\
F(0, 0) & & F(1, 1)
}$$
whose left leg is contained in $W$ and an object in $\mm{Span}(\cat{C}, W)_2$ is a diagram of the form
$$\xymatrix@rd@R=1.5pc@C=1.5pc{
F(0, 2)\ar[d]_-{\scriptscriptstyle\sim}\ar[r] & F(1, 2)\ar[d]^-{\scriptscriptstyle\sim} \ar[r] & F(2, 2)\\
F(0, 1)\ar[d]_-{\scriptscriptstyle\sim}\ar[r] & F(1, 1)\\
F(0, 0)
}$$
where the square is cartesian.
\begin{lemma}\label{lem:lesssquares}
Let $\cat{C}$ be an $\infty$-category and let $F\colon N(\mb{\Sigma}_n)\rt \cat{C}$ be a functor. Then the following are equivalent:
\begin{enumerate}
\item for each $i<i'\leq j'<j$, the square \eqref{diag:pullbackspan} is cartesian in $\cat{C}$.
\item for each $j>i+1$, the square \eqref{diag:pullbackspan} is cartesian for $i'=i+1$ and $j'=j-1$.
\item $F$ is a right Kan extension of its restriction to the full subcategory $N(\mb{\Lambda}_n)\subseteq N(\mb{\Sigma}_n)$ consisting of $(i, j)$ with $j-i\leq 1$.
\end{enumerate}
\end{lemma}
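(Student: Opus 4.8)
The poset $\mb{\Sigma}_n$ is the poset of pairs $(i,j)$ with $0 \le i \le j \le n$, where the order is $\le$ in the first coordinate and $\ge$ in the second. I want to prove the equivalence of three conditions on a functor $F \colon N(\mb{\Sigma}_n) \rt \cat{C}$: condition (1) asks that all squares indexed by $i < i' \le j' < j$ be cartesian, condition (2) asks only for the ``minimal'' squares with $i' = i+1$ and $j' = j-1$ to be cartesian, and condition (3) is a right Kan extension condition along the inclusion $N(\mb{\Lambda}_n) \subseteq N(\mb{\Sigma}_n)$, where $\mb{\Lambda}_n$ consists of those $(i,j)$ with $j - i \le 1$. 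The plan is to prove $(1) \Rightarrow (2)$ trivially, then $(2) \Rightarrow (1)$ by a pasting argument, and finally to show $(1) \Leftrightarrow (3)$ by analyzing the relevant slice categories and applying the pointwise criterion for right Kan extensions.

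\smallskip

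\textbf{The implications among (1) and (2).} That (1) implies (2) is immediate, since the squares in (2) are a special case. For the converse, I would argue that any square indexed by a general $(i,j)$ and $(i',j')$ with $i < i' \le j' < j$ decomposes as a pasting of the minimal squares. Concretely, the square with corners $F(i,j), F(i',j), F(i,j'), F(i',j')$ can be built up by horizontally pasting squares that increment the left index from $i$ to $i'$ one step at a time, and vertically pasting squares that decrement the right index from $j$ to $j'$ one step at a time. Using the standard fact that a rectangle pasted from cartesian squares is cartesian (and that in each single step one of the two squares degenerates or is a minimal square), an induction on $(i'-i) + (j-j')$ shows every square in (1) is cartesian. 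The one subtlety is that the minimal squares of (2) have $i' = i+1$ and $j' = j-1$ simultaneously, so to reduce a general step I must also use that squares where one pair of opposite edges consists of equivalences (identities on the relevant objects) are automatically cartesian; these arise along the boundary of the pasting.

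\smallskip

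\textbf{The equivalence with the Kan extension condition.} For $(1) \Leftrightarrow (3)$, I would use the pointwise formula for right Kan extensions: $F$ is a right Kan extension of $F|_{N(\mb{\Lambda}_n)}$ precisely when, for each $(i,j) \in \mb{\Sigma}_n$, the map $F(i,j) \rt \lim F|_{N(\mb{\Lambda}_n)_{(i,j)/}}$ is an equivalence, where $(i,j)/$ denotes the appropriate slice (the poset of elements of $\mb{\Lambda}_n$ receiving a map from $(i,j)$, i.e.\ lying $\ge (i,j)$ in $\mb{\Sigma}_n$). The key combinatorial step is to identify this slice: the elements $(a,b) \in \mb{\Lambda}_n$ with $(i,j) \le (a,b)$ are those with $i \le a \le b \le j$ and $b - a \le 1$, namely the diagonal objects $(a,a)$ for $i \le a \le j$ together with the adjacent pairs $(a,a+1)$ for $i \le a < j$. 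I expect this slice poset to have a nerve whose limit is computed by an iterated (or total) pullback along the zig-zag $F(i,i) \rt F(i,i+1) \lt \cdots$, and I would show this limit coincides with the iterated pullback built from the minimal squares, thereby matching condition (2) (equivalently (1)).

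\smallskip

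\textbf{Main obstacle.} The hardest part will be the last equivalence: precisely matching the homotopy limit over the slice $N(\mb{\Lambda}_n)_{(i,j)/}$ with the cartesian-square conditions. I expect the cleanest route is an induction on $j - i$, peeling off one corner of $\mb{\Sigma}_n$ at a time (for instance, showing that $F$ is a right Kan extension iff it is a right Kan extension over the subposet omitting $(i,j)$ together with the single fresh minimal square at $(i,j)$ being cartesian), using that right Kan extensions can be computed one object at a time along a filtration of $\mb{\Sigma}_n$ by the value $j - i$. The bookkeeping of which objects of $\mb{\Lambda}_n$ appear in each slice, and verifying that the relevant slice categories are cofinal enough that their limits reduce to the minimal pullbacks, is where the real work lies; the remaining implications are formal pasting arguments.
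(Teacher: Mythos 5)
Your proposal is correct and takes essentially the same route as the paper: (1)$\Leftrightarrow$(2) via the pasting lemma for cartesian squares, and the equivalence with (3) via exactly the filtration you call the ``cleanest route'' --- the subposets of pairs with $j-i\le k$, transitivity of right Kan extensions, and initiality of the cospan $(i,j-1)\rt (i+1,j-1)\lt (i+1,j)$ in each slice. The only superfluous step is your appeal to degenerate squares in the pasting argument: the rectangle indexed by $i<i'\le j'<j$ decomposes entirely into unit squares already of the form required in (2), since every unit square at position $(a,b)$ in that grid satisfies $a+1\le i'\le j'\le b-1$.
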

\begin{proof}
The equivalence of (1) and (2) follows from the pasting lemma for cartesian squares in $\cat{C}$ \cite[Lemma 4.4.2.1]{lur09}. For the equivalence with (3), let $\mb{\Sigma}_n^{(k)}$ be the full subcategory of $\mb{\Sigma}_n$ on those pairs $(i, j)$ for which $j-i\leq k$, where $1\leq k \leq n$. Note that $\mb{\Lambda}_n=\mb{\Sigma}_n^{(1)}$. For fixed $(i, j)$ with $j-i=k+1$, the obvious inclusion
$$\xymatrix{
\Big\{(i, j-1)\rt (i+1, j-1)\lt (i+1, j)\Big\} \ar[r] & (i, j)/\mb{\Sigma}_n^{(k)}
}$$
is (homotopically) final. This means that $F\big|N(\mb{\Sigma}_n^{(k+1)})$ is a right Kan extension of $F\big|N(\mb{\Sigma}_n^{(k)})$ if and only if the square \eqref{diag:pullbackspan} is cartesian for $i<i+1\leq j-1<j$ for all $(i, j)$ such that $j-i=k+1$. Using \cite[Proposition 4.3.2.8]{lur09}, it follows that condition (2) is equivalent to condition (3).
\end{proof}
\begin{lemma}[\cite{hau14}]\label{lem:2catofspans}
The simplicial quasicategory $\mm{Span}(\cat{C}, W)\colon \mb{\Delta}^\mm{op}\rt \sSet$ is a Segal object in the sense of Definition \ref{def:segalobject}.
\end{lemma}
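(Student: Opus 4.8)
The plan is to verify the three conditions of Definition \ref{def:segalobject} in turn. Condition (2) is immediate: since $N(\mb{\Sigma}_0)$ is a point, $\mm{Span}(\cat{C}, W)_0$ is the maximal sub-$\infty$-category of $\mm{Fun}(N(\mb{\Sigma}_0), \cat{C})=\cat{C}$ on all objects and on those arrows that are equivalences, i.e.\ the maximal Kan complex $K(\cat{C})\subseteq \cat{C}$. More generally each $\mm{Span}(\cat{C}, W)_n$ is a maximal sub-$\infty$-category of the quasicategory $\mm{Fun}(N(\mb{\Sigma}_n), \cat{C})$, hence a quasicategory: it consists of all simplices whose vertices satisfy condition (a) and whose edges satisfy condition (b), and filling an inner horn only ever produces new edges that are composites of existing ones, which again satisfy (b). Here I would record two stability facts that will be used repeatedly: conditions (a) (cartesianness of \eqref{diag:pullbackspan}) and the requirement that the legs lie in $W$ are invariant under objectwise equivalence of functors $N(\mb{\Sigma}_n)\rt \cat{C}$ (using that $W$ is closed under homotopy), while condition (b) is closed under composition.

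I expect Reedy fibrancy (condition (1)) to follow cleanly from these stability facts once the matching map is presented correctly. Writing $\partial_n\subseteq N(\mb{\Sigma}_n)$ for the union of the images of the face maps $N(\mb{\Sigma}_k)\rt N(\mb{\Sigma}_n)$, I would identify the matching object $M_n\mm{Span}(\cat{C}, W)$ with the sub-$\infty$-category of $\mm{Fun}(\partial_n, \cat{C})$ cut out by conditions (a) and (b) on the proper faces, so that the matching map is the restriction along the monomorphism $\partial_n\rt N(\mb{\Sigma}_n)$. The pushout--product axiom for the Joyal model structure makes $\mm{Fun}(N(\mb{\Sigma}_n), \cat{C})\rt \mm{Fun}(\partial_n, \cat{C})$ an isofibration, and I claim the induced map of sub-$\infty$-categories is again an isofibration. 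For inner horns this is clear, since every vertex of a filled simplex already lies in the horn and hence in $\mm{Span}(\cat{C}, W)_n$; for lifting an equivalence it follows because the lift obtained upstairs is an objectwise equivalence, whose target therefore still satisfies (a) and has legs in $W$, and whose edge satisfies (b). Crucially, these lifting problems never require one to manufacture a new cartesian square: the cartesian condition is only ever read off at data already present in $\mm{Span}(\cat{C}, W)_n$ or transported along an equivalence.

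The Segal condition (condition (3)) is the main obstacle, and this is where Definition \ref{def:categorywithhypercovers} enters. The nerve of the subposet $\mb{\Lambda}_n\subseteq \mb{\Sigma}_n$ of Lemma \ref{lem:lesssquares} is the zig-zag $(0,0)\lt(0,1)\rt(1,1)\lt\cdots\rt(n,n)$, so a functor $N(\mb{\Lambda}_n)\rt \cat{C}$ with the maps $(i,i{+}1)\rt(i,i)$ in $W$ is exactly a chain of $n$ composable spans with left legs in $W$; passing to maximal sub-$\infty$-categories identifies this with the iterated fibre product $X_1\times_{X_0}\cdots\times_{X_0}X_1$ over $X_0=K(\cat{C})$ (which computes the homotopy fibre product once Reedy fibrancy is known), and the Segal map is precisely restriction along $N(\mb{\Lambda}_n)\rt N(\mb{\Sigma}_n)$. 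By Lemma \ref{lem:lesssquares} the objects of $\mm{Span}(\cat{C}, W)_n$ are exactly the functors that are right Kan extensions of their restriction to $N(\mb{\Lambda}_n)$, so I would invoke \cite[Proposition 4.3.2.15]{lur09}: restriction from the full subcategory of such right Kan extensions is a trivial fibration onto the full subcategory of functors admitting a right Kan extension. The heart of the matter is then to show that \emph{every} composable chain of spans with left legs in $W$ admits such a right Kan extension. This is exactly the content of Definition \ref{def:categorywithhypercovers}: the pointwise value of the extension at $(i,j)$ is computed by iterated pullbacks along the $W$-maps occurring in \eqref{diag:pullbackspan}, which exist and again lie in $W$, so the extension exists, satisfies (a), and has legs in $W$. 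Since condition (b) only constrains the vertices $(i,i)$, which lie in $N(\mb{\Lambda}_n)$, this trivial fibration descends to the maximal sub-$\infty$-categories and yields the desired Segal equivalence. Verifying this last compatibility of the right Kan extension construction with the hypercover structure and with the strict fibre-product bookkeeping is the step I expect to demand the most care.
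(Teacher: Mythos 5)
Your proposal is correct and follows essentially the same route as the paper: Reedy fibrancy is obtained by restricting the categorical fibration $\mm{Fun}(N(\mb{\Sigma}_n),\cat{C})\rt\mm{Fun}(K,\cat{C})$ to the maximal sub-$\infty$-categories cut out by conditions (a) and (b) (using that inner-horn fillers only create composite edges and that $J$-lifts are objectwise equivalences, which preserve (a) and (b)), and the Segal condition is proved by identifying $\mm{Span}(\cat{C}, W)_n$ with relative right Kan extensions along $N(\mb{\Lambda}_n)\subseteq N(\mb{\Sigma}_n)$ via Lemma \ref{lem:lesssquares} and invoking \cite[Proposition 4.3.2.15]{lur09}, with existence of the extensions supplied by pullbacks along hypercovers. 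Your added checks (that $\mm{Span}(\cat{C},W)_0=K(\cat{C})$ and that condition (b) is detected on the vertices $(i,i)$ lying in $N(\mb{\Lambda}_n)$) match the observations made in the paper's proof.
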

\begin{proof}
To see that $\mm{Span}(\cat{C}, W)$ is Reedy fibrant, let $\mm{Span}(\cat{C}, W)_n\rt M_n\mm{Span}(\cat{C}, W)$ be its $n$-th matching map for $n\geq 1$ (the case $n=0$ is trivial). 
This matching map fits into a commuting diagram
$$\xymatrix{
\mm{Span}(\cat{C}, W)_n\ar[r]\ar[d]_{\subseteq} & M_n\mm{Span}(\cat{C}, W)\ar[d]^\subseteq\\
\mm{Fun}(N(\mb{\Sigma}_n), \cat{C})\ar[r] & \mm{Fun}(K, \cat{C})
}$$
where $K\subseteq N(\mb{\Sigma}_n)$ is the union of all simplicial subsets $N(\mb{\Sigma}_{n-1})\subseteq N(\mb{\Sigma}_n)$ induced by face maps. The bottom map is a categorical fibration between quasicategories since the Joyal model structure is cartesian closed. The vertical maps are inclusions of maximal sub-$\infty$-categories containing certain vertices and arrows. This immediately implies that the matching map is an inner fibration. In fact, since $K\subseteq N(\mb{\Sigma}_n)$ contains all vertices of the form $(i, i)$, it follows that an arrow in $\mm{Fun}(N(\mb{\Sigma}_n), \cat{C})$ is contained in $\mm{Span}(\cat{C}, W)_n$ if and only if its image in $\mm{Fun}(K, \cat{C})$ is contained in $M_n\mm{Span}(\cat{C}, W)$. This implies that the top horizontal functor has the right lifting property against $\{0\}\rt J$ as well.

To see that $\mm{Span}(\cat{C}, W)$ satisfies the Segal conditions, observe that the $n$-th Segal map fits into a commuting square
$$\xymatrix@!C=6.2pc{
\mm{Span}(\cat{C}, W)_n \ar[rr]\ar[d]_\subseteq & & \mm{Span}(\cat{C}, W)_1\times_{\mm{Span}(\cat{C}, W)_0} \cdots \times_{\mm{Span}(\cat{C}, W)_0} \mm{Span}(\cat{C}, W)_1\ar[d]^\subseteq \\
\Fun^\mm{Ran}(N(\mb{\Sigma}_n), \cat{C}) \ar[r]_-{\subseteq} & \Fun(N(\mb{\Sigma}_n), \cat{C})\ar[r] & \Fun(N(\mb{\Lambda}_n), \cat{C})
 }$$ 
 where $\mb{\Lambda}_n$ is as in Lemma \ref{lem:lesssquares} and $\Fun^\mm{Ran}(N(\mb{\Sigma}_n), \cat{C})$ is the full subcategory of those functors that are right Kan extensions of their restrictions to $\Lambda_n$. We now make the following two observations:
 \begin{itemize}
 \item a right Kan extension $F\colon N(\mb{\Sigma}_n)\rt \cat{C}$ is a relative functor (with respect to the weak equivalences of Construction \ref{con:spancategory}) if and only if its restriction to $\mb{\Lambda}_n$ is a relative functor, since hypercovers in $\cat{C}$ are stable under base change. This means precisely that its restriction lies in the image of the right vertical functor.
 \item a natural transformation $F\rt G$ in $\Fun^\mm{Ran}(N(\mb{\Sigma}_n), \cat{C})$ induces equivalences $F(i, i)\rt G(i, i)$ if and only if its restriction to $\mb{\Lambda}_n$ does.
 \end{itemize}
 It follows from these two observations that the above square is cartesian. The right vertical functor takes values in the full subcategory of $\Fun(N(\mb{\Lambda}_n), \cat{C})$ on those diagrams that \emph{admit} right Kan extensions, by Lemma \ref{lem:lesssquares} and \cite[Lemma 4.3.2.13]{lur09}, together with the fact that $\cat{C}$ admits pullbacks along hypercovers. It now follows from \cite[Proposition 4.3.2.15]{lur09} that the Segal map is a trivial fibration.
\end{proof}
The simplicial quasicategory $\mm{Span}(\cat{C}, W)$ is our model for the $(\infty, 2)$-category of spans in $\cat{C}$ whose `domain leg' is a hypercover. Note that for two objects $c, d\in \mm{Span}(\cat{C}, W)_0=K(\cat{C})$, the quasi-category of maps between them is given by the quasicategory $\mm{Span}^W_{\cat{C}}(c, d)$ of Remark \ref{rem:relativespans}. By Lemma \ref{lem:free1caton2cat}, the free $(\infty, 1)$-category $|\mm{Span}(\cat{C}, W)|$ generated by the $(\infty, 2)$-category $\mm{Span}(\cat{C}, W)$ can be obtained simply by group-completing all these mapping categories. 

Let us now turn to proving that the Segal space $|\mm{Span}(\cat{C}, W)|$ is indeed a Segal space model for the localization $\cat{C}[W^{-1}]$. For each $n\geq 0$, let $\phi_n\colon \mb{\Sigma}_n\rt [n]$ be the functor sending $(i, j)$ to $i$ and observe that these functors together yield a natural tranformation of $\mb{\Delta}$-indexed diagrams in $\cat{Cat}$. For each $n$, precomposing with $\phi_n$ gives a functor
$$\xymatrix{
\phi_n^*\colon \Fun(\Delta[n], \cat{C})\ar[r] & \Fun(N(\mb{\Sigma}_n), \cat{C})
}$$
sending a sequence $c_0\rt \cdots \rt c_n$ to the diagram
\begin{equation}\label{diag:reducedspans}\vcenter{
\xymatrix@R=1.2pc@C=1.2pc@rd{
c_0\ar@{=}[d]\ar[r] & c_1\ar@{=}[d]\ar[r] & c_2\ar@{=}[d]\ar[r] & \cdots \ar@{=}[d]\ar[r] & c_n.\\
\cdots\ar@{=}[d]\ar[r] & \cdots \ar[r]\ar@{=}[d] & \cdots \ar@{=}[d]\ar[r] & \cdots\\
c_0\ar@{=}[d]\ar[r] & c_1\ar@{=}[d]\ar[r] & c_2\\
c_0\ar@{=}[d]\ar[r] & c_1\\
c_0
}}\end{equation}
Such a diagram is clearly an object in $\mm{Span}(\cat{C}, W)_n$ and it follows that restriction along $\phi$ induces a natural functor
$$\xymatrix{
\phi^*_n\colon \mc{N}(\cat{C})_n=K(\Fun(\Delta[n], \cat{C}))\ar[r] & \mm{Span}(\cat{C}, W)_n.
}$$
Together these functors determine a map of bisimplicial sets $u\colon \mc{N}(\cat{C})\rt \mm{Span}(\cat{C}, W)$.
\begin{theorem}\label{thm:bicatofspans}
Let $(\cat{C}, W)$ be an $\infty$-category with hypercovers. The composite map 
$$\xymatrix{
f\colon \mc{N}(\cat{C})\ar[r]^-u & \mm{Span}(\cat{C}, W)\ar[r] & |\mm{Span}(\cat{C}, W)|
}$$
realizes the latter as a Segal space model for the localization $\cat{C}[W^{-1}]$.
\end{theorem}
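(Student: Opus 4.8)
The plan is to show that $f$ inverts the maps of $W$ and that the induced functor $g\colon \cat{C}[W^{-1}]\rt |\mm{Span}(\cat{C}, W)|$ is a weak equivalence in the model structure for complete Segal spaces, i.e.\ a Dwyer--Kan equivalence, by verifying that it is essentially surjective and fully faithful. Essential surjectivity will be immediate: the map $u$ is the identity on object spaces $\mc{N}(\cat{C})_0=K(\cat{C})=\mm{Span}(\cat{C}, W)_0$, and levelwise group completion leaves degree $0$ unchanged (Lemma \ref{lem:free1caton2cat}), so $g$ is the identity on objects and in particular hits every object of $|\mm{Span}(\cat{C}, W)|$.

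To see that $f$ inverts $W$, I would work in the homotopy category of the Segal space $|\mm{Span}(\cat{C}, W)|$, whose hom-set from $c$ to $d$ is $\pi_0\,\mm{Span}^W_{\cat{C}}(c, d)$ and whose composition is induced by the pullback of spans coming from the face map $d_1$ out of $\mm{Span}(\cat{C}, W)_2$ (Construction \ref{con:spancategory}). A map $w\colon c\rt d$ in $W$ is sent by $\phi_1^*$ to the span $c\xleftarrow{\mm{id}} c\xrightarrow{w} d$, and I claim its inverse is the span $d\xleftarrow{w}c\xrightarrow{\mm{id}} c$, which is a legitimate span since $w\in W$. Forming the two pullback composites yields the spans $c\lt c\times_d c\rt c$ and $d\xleftarrow{w}c\xrightarrow{w}d$; the diagonal $c\rt c\times_d c$ and the map $w\colon c\rt d$ respectively define morphisms of spans relating these composites to the constant (identity) spans. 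Since a morphism of spans identifies its source and target in $\pi_0$, both composites equal the identity in $\ho|\mm{Span}(\cat{C}, W)|$, so $w$ maps to an invertible morphism. By the universal property of the localization, $f$ then factors through a functor $g\colon \cat{C}[W^{-1}]\rt |\mm{Span}(\cat{C}, W)|$.

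For full faithfulness I would compare two descriptions of the mapping spaces. On one side, Corollary \ref{cor:mappingspaceoffractioncat} exhibits $\Map_{\cat{C}[W^{-1}]}(c, d)$ as the group completion of $\mm{Span}^W_{\cat{C}}(c, d)$, realized as the fiber over $d$ of $H(c)\rt |H(c)|$. On the other side, $\Map_{|\mm{Span}(\cat{C}, W)|}(c, d)$ is the homotopy fiber of $|\mm{Span}(\cat{C}, W)_1|\rt K(\cat{C})\times K(\cat{C})$ over $(c, d)$; since the fiber of $\mm{Span}(\cat{C}, W)_1$ over $(c, d)$ is the mapping category $\mm{Span}^W_{\cat{C}}(c, d)$ (the remark following Lemma \ref{lem:2catofspans}), this homotopy fiber should again be the group completion of $\mm{Span}^W_{\cat{C}}(c, d)$. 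Both identifications arise from the same unit map $\mm{Span}^W_{\cat{C}}(c, d)\rt |\mm{Span}^W_{\cat{C}}(c, d)|$, and since $g$ is compatible with the map $\phi_1^*$ defining $u$, the map it induces on mapping spaces is precisely this unit and hence an equivalence. Together with essential surjectivity this makes $g$ a Dwyer--Kan equivalence, which is exactly the assertion that $f$ realizes $|\mm{Span}(\cat{C}, W)|$ as a Segal space model for $\cat{C}[W^{-1}]$. This full faithfulness may alternatively be packaged through Corollary \ref{cor:fullyfaithfulonhypercoverlocalization}, applied with $\cat{D}$ the underlying quasicategory of $|\mm{Span}(\cat{C}, W)|$ and using that $\mm{Span}^\mm{eq}_{\cat{D}}(f(c), f(d))\simeq \Map_{|\mm{Span}(\cat{C}, W)|}(c, d)$, which reduces the theorem to the same statement.

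The main obstacle is the claim that the homotopy fiber over $(c, d)$ of the levelwise group completion $|\mm{Span}(\cat{C}, W)_1|$ agrees with the group completion of the fiber $\mm{Span}^W_{\cat{C}}(c, d)$, since interchanging homotopy fibers with group completion is not valid in general. I expect to resolve this exactly as in the proof of Corollary \ref{cor:mappingspaceoffractioncat}: by exhibiting the relevant projection as a cocartesian fibration whose lifts along hypercovers are cartesian and invoking Lemma \ref{lem:completionofcocartesianfibrations}, which guarantees that group completion can be computed fiberwise. Concretely, fixing the source object $c$ reduces the computation to the cocartesian fibration $H(c)\rt \cat{C}$ of Definition \ref{def:spanfibration}, whose fiberwise group completion $|H(c)|\rt \cat{C}$ was already analyzed, so that the fiber over $d$ supplies the desired identification compatibly with the unit map.
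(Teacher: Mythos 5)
Your outline is sound in several places --- essential surjectivity, the explicit inverse span showing that $f$ inverts $W$, and the identification of $\Map_{|\mm{Span}(\cat{C},W)|}(c,d)$ with $|\mm{Span}^W_{\cat{C}}(c,d)|$ via cocartesian fibrations over the Kan base $K(\cat{C})\times K(\cat{C})$ are all correct and can be made precise. The gap is in the full faithfulness step. Once you produce $g\colon \cat{C}[W^{-1}]\rt |\mm{Span}(\cat{C},W)|$ from the universal property, you have no direct handle on what $g$ does to mapping spaces; you only know that $g\circ u'\simeq f$ for $u'\colon\cat{C}\rt\cat{C}[W^{-1}]$. Knowing that $\Map_{\cat{C}[W^{-1}]}(c,d)$ and $\Map_{|\mm{Span}(\cat{C},W)|}(c,d)$ are each abstractly equivalent to $|\mm{Span}^W_{\cat{C}}(c,d)|$, and that the two composites out of $\Map_{\cat{C}}(c,d)$ agree, does not imply that the comparison map between them is an equivalence --- the assertion that ``the map it induces on mapping spaces is precisely this unit'' is exactly what needs proof. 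The same issue resurfaces in your alternative packaging via Corollary \ref{cor:fullyfaithfulonhypercoverlocalization} with $\cat{D}$ the underlying quasicategory of $|\mm{Span}(\cat{C},W)|$: there you must show that the induced functor $\mm{Span}^W_{\cat{C}}(c,d)\rt\mm{Span}^{\mm{eq}}_{\cat{D}}(f(c),f(d))$ is a Kan--Quillen equivalence, which amounts to checking that sending a span $c\stackrel{w}{\lt}e\rt d$ to $f(h)\circ f(w)^{-1}$ agrees coherently with the group-completion unit. This is provable (it is the usual calculus-of-fractions compatibility), but it is a genuine additional lemma, not a formality.

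The paper arranges things so that this identification is never needed. It first shows that for the trivial system of hypercovers one has an equivalence of Segal spaces $\mc{N}(\cat{D})\rt\mm{Span}(\cat{D},\mm{eq})$, and then uses the naturality of the whole construction in relative functors, applied to $(\cat{C},W)\rt(\cat{C}[W^{-1}],\mm{eq})$. This reduces the theorem to showing that $\mm{Span}(\cat{C},W)\rt\mm{Span}(\cat{C}[W^{-1}],\mm{eq})$ is fully faithful on mapping categories, i.e.\ that $\mm{Span}^W_{\cat{C}}(c,d)\rt\mm{Span}^{\mm{eq}}_{\cat{C}[W^{-1}]}(c,d)$ is a Kan--Quillen equivalence --- and here Corollary \ref{cor:fullyfaithfulonhypercoverlocalization} applies with $\cat{D}=\cat{C}[W^{-1}]$, where the localized functor is the identity and fully faithfulness is automatic. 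The map being tested is induced directly by the localization functor on span diagrams, so there is nothing to identify. If you want to salvage your route, the cleanest fix is to adopt this reduction: replace the target of your comparison by $\mm{Span}(\cat{C}[W^{-1}],\mm{eq})$ rather than by $|\mm{Span}(\cat{C},W)|$ itself.
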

Let us first prove the theorem in a special case:
\begin{lemma}
Let $\cat{C}$ be an $\infty$-category, considered as an $\infty$-category with hypercovers $(\cat{C}, \mm{eq})$ given by the equivalences. Then the map $u\colon \mc{N}(\cat{C})\rt \mm{Span}(\cat{C}, \mm{eq})$ is an equivalence of Segal spaces.
\end{lemma}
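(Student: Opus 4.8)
The plan is to invoke the criterion that a map between Segal objects is an equivalence in the $2$-categorical model structure precisely when it is a Segal equivalence, i.e.\ fully faithful and essentially surjective (\cite[Theorem 1.2.13]{lur09g}). Both $\mc{N}(\cat{C})$ and $\mm{Span}(\cat{C}, \mm{eq})$ are Segal objects, the latter by Lemma~\ref{lem:2catofspans}, and in degree $0$ the map $u$ is the identity on $K(\cat{C})$ (since $N(\mb{\Sigma}_0)=\Delta[0]$ and $\phi_0=\mm{id}$), so $u$ is a bijection on objects and in particular essentially surjective. It therefore remains to show that $u$ is fully faithful, i.e.\ that for each pair $c, d$ the induced functor on mapping quasicategories
$$
u_{c,d}\colon \Map_{\cat{C}}(c, d)\rt \mm{Span}^{\mm{eq}}_{\cat{C}}(c, d)
$$
is an equivalence of quasicategories; here I use that the mapping object of $\mc{N}(\cat{C})$ is the usual mapping space $\Map_{\cat{C}}(c,d)$ and that the mapping object of $\mm{Span}(\cat{C}, \mm{eq})$ is the quasicategory $\mm{Span}^{\mm{eq}}_{\cat{C}}(c, d)$ of Remark~\ref{rem:relativespans}, as observed right after Lemma~\ref{lem:2catofspans}.

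The key point is that $\mm{Span}^{\mm{eq}}_{\cat{C}}(c, d)$ is in fact a Kan complex. Indeed, a morphism in this quasicategory is a natural transformation of spans $[c\lt \tilde{c}\rt d]\rt [c\lt \tilde{c}'\rt d]$ fixing the two feet $c$ and $d$, hence is determined by its apex component $\tilde{c}\rt \tilde{c}'$, which is compatible with the two equivalences $\tilde{c}\rto{\sim} c$ and $\tilde{c}'\rto{\sim} c$. By the $2$-out-of-$3$ property $\tilde{c}\rt \tilde{c}'$ is then an equivalence, so the morphism of spans is a pointwise equivalence of diagrams $\Lambda^0[2]\rt \cat{C}$ and thus an equivalence in $\mm{Span}^{\mm{eq}}_{\cat{C}}(c, d)$. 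Every morphism being invertible, $\mm{Span}^{\mm{eq}}_{\cat{C}}(c, d)$ is an $\infty$-groupoid. (The same argument applied levelwise shows that each $\mm{Span}(\cat{C}, \mm{eq})_n$ is a Kan complex, so $\mm{Span}(\cat{C}, \mm{eq})$ is literally a Segal space.)

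To see that $u_{c,d}$ is an equivalence, I would identify it with the fibre over $d$ of the natural map of cocartesian fibrations $c/\cat{C}\rt H^{\mm{eq}}(c)$ over $\cat{C}$ that sends an arrow $c\rt x$ to the span $c\lt c\rt x$ with identity left leg (this is the map appearing in the proof of Corollary~\ref{cor:fullyfaithfulonhypercoverlocalization}, specialized to $f=\mm{id}$). This map is a covariant weak equivalence: the inclusion $\{c\}\rt c/\cat{C}$ of the initial object $\mm{id}_c$ is a covariant equivalence, the inclusion $\{c\}\rt H^{\mm{eq}}(c)$ of the constant span is an $\mm{eq}$-local (equivalently covariant, since localizing at equivalences changes nothing) equivalence by Lemma~\ref{lem:Wlocalequivalence}, and the two composites agree, so the claim follows by $2$-out-of-$3$. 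Now $c/\cat{C}\rt \cat{C}$ is a left fibration and $H^{\mm{eq}}(c)\rt \cat{C}$ is a cocartesian fibration by Lemma~\ref{lem:leftkanbycodomainfib}, so Lemma~\ref{lem:completionofcocartesianfibrations} shows that this covariant weak equivalence restricts to a Kan--Quillen weak equivalence on fibres; that is, $u_{c,d}$ is a weak homotopy equivalence. As both its source and target are Kan complexes, it is an equivalence of quasicategories, which is exactly the full faithfulness we wanted.

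The step I expect to be the main obstacle is the second paragraph: the entire content of the lemma is that imposing the condition that the left leg be an equivalence collapses the \emph{category} $\mm{Span}^{\mm{eq}}_{\cat{C}}(c,d)$ into a \emph{space}, so that the group completion that Corollary~\ref{cor:mappingspaceoffractioncat} would otherwise only reach up to Kan fibrant replacement is already attained. Extracting this collapse cleanly from the $2$-out-of-$3$ property, and checking that $u_{c,d}$ is precisely the comparison map induced by the covariant equivalence $c/\cat{C}\rt H^{\mm{eq}}(c)$ (so that the weak homotopy equivalence of fibres upgrades to an equivalence of quasicategories), is where the care is needed; everything else is a formal application of the Segal-object criterion together with Lemmas~\ref{lem:Wlocalequivalence} and~\ref{lem:completionofcocartesianfibrations}.
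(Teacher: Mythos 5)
Your argument is correct, but it takes a genuinely different route from the paper. The paper proves the stronger statement that each $u_n\colon \mc{N}(\cat{C})_n\rt \mm{Span}(\cat{C},\mm{eq})_n$ is a \emph{levelwise} weak equivalence: after the same $2$-out-of-$3$ observation that you make (each $\mm{Span}(\cat{C},\mm{eq})_n$ is a Kan complex), it identifies $\mm{Span}(\cat{C},\mm{eq})_n$ with the diagrams $F\colon N(\mb{\Sigma}_n)\rt\cat{C}$ that are left Kan extensions of their restriction to the last column $\{(0,n)\rt\cdots\rt(n,n)\}$, so that restriction along that column is a trivial fibration onto $K(\Fun(\Delta[n],\cat{C}))$ of which $u_n$ is a section. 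You instead invoke the fully-faithful-and-essentially-surjective criterion and prove full faithfulness by identifying $u_{c,d}$ with the fiber over $d$ of $c/\cat{C}\rt H^{\mm{eq}}(c)$, which is a covariant equivalence by $2$-out-of-$3$ from Lemma \ref{lem:Wlocalequivalence}, and then applying Lemma \ref{lem:completionofcocartesianfibrations}. This works, and has the virtue of running exactly parallel to the proof of Corollary \ref{cor:fullyfaithfulonhypercoverlocalization}, but it is heavier: it needs the (true, and worth stating explicitly) observation that the $\mm{eq}$-local covariant model structure coincides with the covariant model structure, because every left fibration restricts to a Kan fibration over an equivalence edge by Remark \ref{rem:kanfibrationover1}, as well as the comparison between the two models of the mapping space ($\Hom^L_{\cat{C}}(c,d)$ as the fiber of $c/\cat{C}$ versus the fiber of $K(\Fun(\Delta[1],\cat{C}))\rt K(\cat{C})^{\times 2}$), which you rightly flag as the point requiring care. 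The paper's Kan-extension argument avoids both issues and yields the levelwise statement directly.
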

\begin{proof}
Note that each $\mm{Span}(\cat{C}, \mm{eq})_n$ is a Kan complex, since a natural transformation $F\rt G$ of diagrams of the form
$$\xymatrix@rd@R=1.5pc@C=1.5pc{
\cdots\ar[d]_-{\scriptscriptstyle\simeq}\ar[r] & \cdots\ar[d]^-{\scriptscriptstyle\simeq} \ar[r] & \cdots\\
F(0, 1)\ar[d]_-{\scriptscriptstyle\simeq}\ar[r] & F(1, 1)\\
F(0, 0)
}$$
is a natural equivalence if and only if each $F(i, i)\rt G(i, i)$ is an equivalence by the 2-out-of-3 property. It follows that $\mm{Span}(\cat{C}, \mm{eq})_n\subseteq K(\Fun(\mb{\Sigma}_n, \cat{C}))$ is given by the full subcomplex consisting of those diagrams $F\colon \mb{\Sigma}_n\rt \cat{C}$ for which the maps $F(i, j)\rt F(i, j')$ are equivalences. Note that this is equivalent to asking that $F$ is a left Kan extension of its restriction to the full subcategory
$$\xymatrix{
j\colon \Big\{(0,n)\rt (1, n)\rt ... \rt (n, n)\Big\}\ar[r] & \mb{\Sigma}_n.
}$$
It follows that restriction along $j$ induces a trivial fibration $j^*\colon \mm{Span}(\cat{C}, \mm{eq})_n\rt K(\Fun(\Delta[n], \cat{C}))$, of which the map $u_n\colon \mc{N}(\cat{C})_n\rt \mm{Span}(\cat{C}, \mm{eq})$ is a section.
\end{proof}
\begin{proof}[Proof (of Theorem \ref{thm:bicatofspans})]
The map of relative $\infty$-categories $(\cat{C}, W)\rt (\cat{C}[W^{-1}], \mm{eq})$ induces a diagram of Segal objects
$$\xymatrix{
\mc{N}(\cat{C})\ar[r]\ar[d] & \mm{Span}(\cat{C}, W)\ar[r] \ar[d] & |\mm{Span}(\cat{C}, W)|\ar[d]\\
\mc{N}(\cat{C}[W^{-1}])\ar[r]_-\simeq & \mm{Span}(\cat{C}[W^{-1}], \mm{eq})\ar[r]_-\simeq & |\mm{Span}(\cat{C}[W^{-1}], \mm{eq})|
}$$
in which the bottom row is given by levelwise weak equivalences. To prove the theorem, it suffices to show that the right vertical map is an equivalence of Segal spaces. Since the map in degree $0$ is weakly equivalent to the map $K(\cat{C})\rt K(\cat{C}[W^{-1}])$, which is clearly surjective on connected components, it suffices to prove that the right vertical map is a fully faithful map of Segal spaces. Equivalently, it suffices to show that the middle vertical map is fully faithful in the sense that on mapping categories, it is given by a Kan-Quillen equivalence between quasicategories.

But for fixed objects $c, d\in \mc{N}(\cat{C})_0 = K(\cat{C})$, the value of the middle vertical functor is given on the mapping space from $c$ to $d$ by the map of quasicategories of spans
$$\xymatrix{
\mm{Span}_{\cat{C}}^W(c, d)\ar[r] & \mm{Span}_{\cat{C}[W^{-1}]}^\mm{eq}(c, d)
}$$ 
from Remark \ref{rem:relativespans}. This map is a Kan-Quillen equivalence by Corollary \ref{cor:fullyfaithfulonhypercoverlocalization}.
\end{proof}
Let us conclude with some simple properties of the functor $u\colon \mc{N}(\cat{C})\rt \mm{Span}(\cat{C}, W)$ to the category of spans itself, rather than its associated $(\infty, 1)$-category.
\begin{lemma}
The map $u\colon \mc{N}(\cat{C})\rt \mm{Span}(\cat{C}, W)$ sends all hypercovers $f\colon d\rt c$ in $\mc{N}(\cat{C})$ to morphisms in the $(\infty, 2)$-category $\mm{Span}(\cat{C}, W)$ that have right adjoints.
\end{lemma}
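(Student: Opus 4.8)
The plan is to exhibit an explicit candidate right adjoint together with unit and counit $2$-morphisms, and then to reduce the verification of the triangle identities to elementary facts about pullbacks. By Construction \ref{con:spancategory} and the formula \eqref{diag:reducedspans} for $u$, the image of a hypercover $f\colon d\rt c$ is the morphism $L:=u(f)\in \mm{Span}^W_{\cat{C}}(d,c)$ represented by the span $d\xleftarrow{\mm{id}} d\xrightarrow{f} c$, whose left leg is an identity and whose right leg is $f$. The natural candidate for its right adjoint is the \emph{transposed} span $R\colon c\to d$ given by $c\xleftarrow{f} d\xrightarrow{\mm{id}} d$. This is a legitimate morphism of $\mm{Span}(\cat{C},W)$ \emph{precisely because} its left leg $f$ lies in $W$; this is the only place where the hypercover hypothesis is used.

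Next I would identify the two relevant composites. Using the composition of spans encoded by the Segal structure of $\mm{Span}(\cat{C},W)$ (Lemma \ref{lem:2catofspans}) together with the existence of pullbacks along hypercovers (Definition \ref{def:categorywithhypercovers}), the composite $R\circ L\colon d\to d$ is the span $d\xleftarrow{\mm{pr}_1} d\times_c d\xrightarrow{\mm{pr}_2} d$ obtained by pulling $f$ back along itself, while $L\circ R\colon c\to c$ is the span $c\xleftarrow{f} d\xrightarrow{f} c$. I then take the unit $\eta\colon \mm{id}_d\Rightarrow R\circ L$ to be the diagonal $\Delta_f\colon d\to d\times_c d$, which is a morphism of spans since $\mm{pr}_1\circ\Delta_f\simeq \mm{id}_d\simeq \mm{pr}_2\circ\Delta_f$, and the counit $\varepsilon\colon L\circ R\Rightarrow \mm{id}_c$ to be the map $f\colon d\to c$ on apexes. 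Both are genuine $2$-morphisms of $\mm{Span}(\cat{C},W)$: the target of $\eta$ has left leg $\mm{pr}_1$, a base change of $f$ and hence again a hypercover, so it remains inside the subcategory.

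The cleanest way to complete the argument is to view $\mm{Span}(\cat{C},W)$ as a \emph{locally full} sub-$(\infty,2)$-category of the full span $2$-category $\mm{Span}(\cat{C},\cat{C})$ of \cite{hau14} (same objects $K(\cat{C})$, and mapping categories $\mm{Span}^W_{\cat{C}}(c,d)\subseteq \mm{Span}_{\cat{C}}(c,d)$ the full subcategories on spans with left leg in $W$, in the notation of Remark \ref{rem:relativespans}). In the ambient $2$-category every forward span is left adjoint to its transpose, by the self-duality of spans, so $L\dashv R$ holds there with exactly the unit and counit described above. Since $L$, $R$, $\eta$, $\varepsilon$ and all the whiskered composites appearing in the triangle identities already live in $\mm{Span}(\cat{C},W)$ when $f\in W$ (the subcategory is closed under span composition, as $W$ is stable under base change and composition), and since a locally fully faithful inclusion reflects equalities of $2$-cells, the adjunction descends to $\mm{Span}(\cat{C},W)$. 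Alternatively, one can argue directly, invoking the standard fact that a $1$-morphism of an $(\infty,2)$-category has a right adjoint if and only if its image in the homotopy $2$-category does; the triangle identities then reduce, after computing the relevant iterated pullbacks, to the identities $\mm{pr}_i\circ\Delta_f\simeq\mm{id}_d$ expressing that the diagonal of a cartesian square is a section of its projections.

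The main obstacle is purely bookkeeping: making precise that the whiskerings $L\star\eta$ and $\varepsilon\star L$ (and their mirror images) are indeed computed by the expected iterated pullbacks and diagonals inside the Segal object $\mm{Span}(\cat{C},W)$, rather than verifying anything genuinely deep. Both the comparison with the full span $2$-category and the passage to the homotopy $2$-category are devices to sidestep the higher coherence data of composition and isolate the elementary pullback identities that actually need to be checked.
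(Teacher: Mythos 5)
Your proposal is correct and follows essentially the same route as the paper: the paper also exhibits the transposed span $c\xleftarrow{f} d \xrightarrow{=} d$ as the right adjoint, with unit given by the diagonal $d\rt d\times_c d$ and counit given by $f$ itself, and reduces the verification to the homotopy $2$-category using the criterion of \cite{rie13}. Your additional remarks (the embedding into the full span $2$-category of \cite{hau14} and the explicit reduction of the triangle identities to $\mm{pr}_i\circ\Delta_f\simeq \mm{id}_d$) merely flesh out details the paper leaves as ``obvious.''
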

\begin{proof}
In light of \cite{rie13} (see also \cite{hau14}), an arrow in an $(\infty, 2)$-category has a right adjoint if and only if it has a right adjoint at the level of the homotopy $2$-category. Given a morphism $f\colon d\rt c$ in $W$, let $d \stackrel{=}{\lt} d\rt c$ be the associated span in $\mm{Span}(\cat{C}, W)$. A right adjoint to this morphism in $\ho_2(\mm{Span}(\cat{C}, W))$ is provided by the span $c \lt d\stackrel{=}{\rt} d$. Indeed, there are obvious unit and counit maps
$$\xymatrix@R=0.7pc{
& d\ar@/_0.7pc/[ldd]_=\ar@/^0.7pc/[rdd]^=\ar[d]  & & &  & d\ar@/_0.7pc/[ldd]_f\ar@/^0.7pc/[rdd]^f\ar[d]_(0.6)f \\
& d\times_c d\ar[ld]^\sim\ar[rd] & & & & c\ar[ld]^=\ar[rd]_=\\\
d & & d & & c & & c
}$$
arising from the diagonal map (as well as the map $f$ itself) in the homotopy category of $\mm{Span}(\cat{C}, W)_1$.
\end{proof}
Each commuting diagram in the homotopy 2-category $\ho_2(\cat{C})$
\begin{equation}\label{diag:pullbackofhypercovers}\vcenter{\xymatrix{
d'\ar[r]^{f'}\ar[d]_{g'} & c'\ar[d]^g\\
d\ar[r]_f & c
}}\end{equation}
induces a homotopy commuting diagram in the 2-category $\ho_2(\mm{Span}(\cat{C}, W))$. When $f$ and $f'$ are hypercovers, their images in the homotopy 2-category $\ho_2(\mm{Span}(\cat{C}, W))$ have adjoints $f^\perp$ and $f'^\perp$, and the invertible 2-cell $g\circ f'\simeq g'\circ f$ induces a $2$-cell $g'\circ f'^\perp \rt f^\perp \circ g$ in $\ho_2(\mm{Span}(\cat{C}, W))$.
\begin{lemma}
Consider a cartesian square  in $\cat{C}$ of the form \eqref{diag:pullbackofhypercovers}, in which $f$ and $f'$ are hypercovers. Then the induced 2-cell $g'\circ f'^\perp\rt f^\perp\circ g$ in $\ho_2(\mm{Span}(\cat{C}, W))$ is invertible.
\end{lemma}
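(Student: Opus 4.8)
My plan is to carry out the entire argument inside the homotopy $2$-category $\ho_2(\mm{Span}(\cat{C}, W))$, where morphisms are homotopy classes of spans, composition is computed by pullback, and—by the previous lemma, applied to both $f$ and $f'$—one has adjunctions $f\dashv f^\perp$ and $f'\dashv f'^\perp$ with $f^\perp=\big[c\stackrel{f}{\lt}d\stackrel{=}{\rt}d\big]$ and $f'^\perp=\big[c'\stackrel{f'}{\lt}d'\stackrel{=}{\rt}d'\big]$. The strategy is to compute the source and target of the mate explicitly as spans and then to show that the mate itself is represented by the canonical comparison map into the pullback $c'\times_c d$, which is invertible exactly when the square is cartesian.

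First I would identify the two composites as spans from $c'$ to $d$. Because the right leg of $f'^\perp$ and the left leg of the span of $g'$ are both identities, no pullback intervenes and $g'\circ f'^\perp\simeq\big[c'\stackrel{f'}{\lt}d'\stackrel{g'}{\rt}d\big]$. Composing the span $\big[c'\stackrel{=}{\lt}c'\stackrel{g}{\rt}c\big]$ of $g$ with $f^\perp$, on the other hand, forms the pullback of $g$ along $f$, giving $f^\perp\circ g\simeq\big[c'\stackrel{p}{\lt}c'\times_c d\stackrel{p'}{\rt}d\big]$; its left leg $p$ is again a hypercover since $W$ is stable under base change.

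Next I would unwind the mate, which is by definition the pasting
$$g'\circ f'^\perp\ \overset{\eta}{\Longrightarrow}\ f^\perp\circ f\circ g'\circ f'^\perp\ \simeq\ f^\perp\circ g\circ f'\circ f'^\perp\ \overset{\epsilon'}{\Longrightarrow}\ f^\perp\circ g,$$
with $\eta\colon\mm{id}_d\rt f^\perp\circ f$ the unit, $\epsilon'\colon f'\circ f'^\perp\rt\mm{id}_{c'}$ the counit, and the middle equivalence whiskered from the $2$-cell $f\circ g'\simeq g\circ f'$ exhibiting commutativity of \eqref{diag:pullbackofhypercovers}. Substituting the explicit descriptions from the previous lemma—$\eta$ given by the diagonal $d\rt d\times_c d$ and $\epsilon'$ by the leg $f'\colon d'\rt c'$—I would track the induced map on the apices of the intermediate spans: the unit embeds the apex $d'$ into $d'\times_c d$ via $x\mapsto(x, g'(x))$, the structural equivalence relabels this apex, and the counit sends $d'\times_c d$ to $c'\times_c d$ by applying $f'$ in the first factor. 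The net map on apices is $(f', g')\colon d'\rt c'\times_c d$, i.e.\ the canonical comparison $\gamma$ determined by $f'$, $g'$ and the witness for the square; since $p\circ\gamma\simeq f'$ and $p'\circ\gamma\simeq g'$, this $\gamma$ is exactly the map of spans representing the mate. I expect this unwinding to be the main obstacle: one must keep careful track of the several intermediate pullbacks (such as $d\times_c d$ and $d'\times_c d$) and of the homotopies that collapse them, and verify that the pasted $2$-cell really is homotopic to $\gamma$.

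With this identification in hand the conclusion is immediate: a map of spans with fixed endpoints is an invertible $2$-cell in $\ho_2(\mm{Span}(\cat{C}, W))$ exactly when the induced map of apices is an equivalence in $\cat{C}$. Since \eqref{diag:pullbackofhypercovers} is cartesian, the comparison $\gamma\colon d'\rt c'\times_c d$ is an equivalence, whence the mate $g'\circ f'^\perp\rt f^\perp\circ g$ is invertible.
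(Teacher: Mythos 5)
Your argument is correct and is essentially the paper's proof: both identify $g'\circ f'^{\perp}$ with the span $c'\xleftarrow{f'}d'\xrightarrow{g'}d$, identify $f^{\perp}\circ g$ with $c'\leftarrow c'\times_c d\rightarrow d$, and recognize the Beck--Chevalley $2$-cell as the comparison map $d'\rightarrow c'\times_c d$, which is an equivalence precisely because the square is cartesian. The only difference is one of detail: the paper simply asserts this identification via a single diagram, whereas you explicitly unwind the mate as the pasting of unit, interchange, and counit --- a worthwhile elaboration, and your tracking of the apices through the intermediate pullbacks is consistent with the explicit units and counits given in the preceding lemma.
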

\begin{proof}
The induced 2-cell can be identified with the dotted map
$$\xymatrix@R=1pc{
 & & d'\ar[llddd]\ar[rrddd] \ar@{..>}[d]\\
 & & x \ar[ld]\ar[rd]\\
 & c'\ar[ld]^= \ar[rd]_g & & d\ar[ld]^f \ar[rd]_=\\
c' & & c & & d
}$$
from $d'$ into the pullback $x$ in $\cat{C}$, which is an equivalence by assumption.
\end{proof}
\begin{remark}
It follows from the previous two results that a map of $(\infty, 2)$-categories $f\colon \cat{C}\rt \cat{D}$ can only factor over $\cat{C}\rt \mm{Span}(\cat{C}, W)$ if it sends all arrows in $W$ to arrows in $\cat{D}$ with right adjoints, such that for any cartesian square \eqref{diag:pullbackofhypercovers}, the corresponding Beck-Chevalley map in $\ho_2(\cat{D})$ is invertible. Work of Gaitsgory and Rozenblyum \cite{gai16} indicates that the functor $\cat{C}\rt \mm{Span}(\cat{C}, W)$ is universal among such maps of $(\infty, 2)$-categories. This universal property would imply the universal property of $|\mm{Span}(\cat{C}, W)|$ as the localization of $\cat{C}$ at $W$ by abstract nonsense, since in an $(\infty, 1)$-category, a left adjoint map is precisely an equivalence and all Beck-Chevalley maps are (trivially) invertible.
\end{remark}

\bibliographystyle{abbrv}
\bibliography{bibliography_groupoids}

%

\end{document}